 \let\mathscr\relax
\DeclareMathOperator{\Tr}{Tr}
\DeclareMathOperator{\vol}{Vol}
\DeclareMathOperator{\Var}{Var}
\newtheorem{thm}{Theorem}[section]
\newtheorem{prop}[thm]{Proposition}
\newtheorem{lem}[thm]{Lemma}
\theoremstyle{definition}
\theoremstyle{remark}
\numberwithin{equation}{section}
\begin{document}
	\title[Nodal lines on the square]{On the behavior of nodal lines near the boundary for Laplace eigenfunctions on the square}

\author[O. Klurman]{Oleksiy Klurman}
\address{School of Mathematics, University of Bristol, United Kingdom and Max Planck Institute for Mathematics, Bonn, Germany}
\email{lklurman@gmail.com}
\author[A. Sartori]{Andrea Sartori}
\address{School of Mathematical Sciences, Tel Aviv University, Israel}
\email{andrea.sartori.16@ucl.ac.uk}
	\maketitle
	\begin{abstract}
		We are interested in the effect of  Dirichlet boundary conditions on the nodal length of Laplace eigenfunctions.  We study random Gaussian Laplace eigenfunctions on the two dimensional square and find  a two terms asymptotic expansion for the expectation of the nodal length in any square of side larger than the Planck scale, along a denisty one sequence of energy levels. The proof relies on a new study of lattice points in small arcs, and shows that the said expectation is independent of the position of the square, giving the same asymptotic expansion both near and far from the boundaries.  
	\end{abstract}
\section{Introduction}
\subsection{Laplace eigenfunctions on plane domains}
Let $\Omega\subset \mathbb{R}^2$ be a plane domain with piece-wise real analytic boundaries, and let $\{\phi_i\}_{i\geq 1}$ be the sequence of Laplace eigenfunctions for the Dirichlet (or Neumann) boundary value problem: 
\begin{align}
	\label{problem 1}
\begin{cases}
	 \Delta \phi_i(x) +\lambda_i^2(x)=0 &x\in \Omega \\
 \phi_i(x)=0 \hspace{3mm}(\text{or}\hspace{3mm} \partial_{\nu}\phi_i(x)=0) &x\in \partial \Omega, 
\end{cases} 
\end{align} 
where $\Delta= \partial_x^2 + \partial_y^2$ for the standard two dimensional (flat) Laplace operator, $\{\lambda_i\}_{i\geq 1}$ is the discrete spectrum of $\Delta$, $\partial \Omega$ denotes the boundary of $\Omega$ and $\partial_{\nu}$ the normal derivative. The nodal set $\phi_{i}^{-1}(0)$ of an eigenfunction $\phi_i$ is a smooth curve outside a (possibly infinite) set of points \cite{C76}. We are interested in the behavior of the \textit{nodal length} $\mathcal{L}(\phi_{i}):= \mathcal{H}(\phi_{i}^{-1}(0))$, where $\mathcal{H}(\cdot)$ is the Hausdorff dimension,  near the boundary of the domain $\partial \Omega$.

Berry \cite{B1} conjectured that high energy Laplace eigenfunctions, on a chaotic surfaces, should  behave as superposition of waves with random direction and amplitude, that is as a Gaussian field $F$ with covariance function
$$\mathbb{E}[F(x)F(y)]= J_{0}(|x-y|).$$
Subsequently Berry \cite{B2} adapted to above model to predict the behavior of the nodal lines in the presence of boundaries. He considered the the random superposition of plane waves 
$$G(x_1,x_2)= \frac{1}{\sqrt{J}}\sum_{j=1}^J \sin(x_2 \sin(\theta_j))\cos(x_1\cos(\theta_j)+ \psi_j),$$
where $J\rightarrow \infty$ and the $\theta_j$ and the $\psi_j$ are random phases, so that the horizontal axis $x_2=0$ serves as a model of the boundaries. Importantly, Berry found that the density of nodal lines near the boundary is smaller than the  the density far away from the boundaries and this resulted in a (negative) logarithmic correction term on the  expected nodal length.

In the case of plane domains with piece-wise real analytic boundaries (and in the much more general case of real analytic manifolds with boundaries) Donnelly and Fefferman \cite{DF90} found $\mathcal{L}(\phi_{i})$ to be proportion to $\lambda_i$, that is
 \begin{align}
 	c_{\Omega}\lambda_i\leq \mathcal{L}(\phi_{i})\leq C_{\Omega}\lambda_i \label{yau}
 \end{align}
for  constants $c_{\Omega}, C_{\Omega}>0$, thus corroborating both Berry's predictions and a conjecture of Yau asserting that \eqref{yau} holds on any $C^{\infty}$ manifold (without boundaries). It worth mentioning that Yau'c conjecture  was established for the real analytic manifolds \cite{B78,BG72,DF}, whereas, more recently, the optimal lower bound and polynomial upper bound were proved \cite{L2,L1,LM} in the smooth case. Unfortunately, results such as \eqref{yau} do not shade any light into the possible effect of boundary conditions on the nodal length. 

In order to understand the behavior of the nodal length near the boundaries, we study random Gaussian Laplace eigenfunctions on the 2d square, also known as \textit{boundary adapted Arithmetic Random Waves}. We find  a two terms asymptotic expansion for the expectation of the nodal length in squares, of size slightly larger than $\lambda_i^{-1/2}$, the Planck scale, both near and far away from the boundaries. In both cases, the said expectation has the same  two terms asymptotic expansion showing that the effect of boundaries seem to be uniform in the whole square, even at small scales. Our findings   extend previous results obtained by Cammarota, Klurman and Wigman \cite{CKW20}, who studied the expectation of the \textit{ global} nodal length of Gaussian Laplace eigenfunctions on $[0,1]^2$, with Dirichlet boundary conditions, and complement a similar study on the two dimension round sphere by Cammarota, Marinucci and Wigman \cite{CMW20}.

\subsection{Laplace spectrum of $[0,1]^2$ }
The  Laplace eigenvalues on the square $[0,1]^2$ with Dirichlet boundary conditions are given by integer representable as the sum of two squares, that is $n\in S:=\{n\in \mathbb{Z}: n=a^2+b^2 \hspace{2mm} \text{for some} \hspace{2mm}a,b \in  \mathbb{Z}\}$ and the eigenfunctions can be written explicitly as a Fourier sum 
\begin{align}
	f_n (x)= \frac{4}{\sqrt{N}}\sum_{\substack{\xi \in \mathbb{Z}^2 \backslash \sim \\|\xi|^2=n}}a_{\xi}\sin (\pi \xi_1 x_1) \sin (\pi \xi_2 x_2) \label{function}
\end{align} 
where $N$ is the number of lattice points on the circle of radius $n$, $x=(x_1,x_2)$, $\xi=(\xi_1,\xi_2)$, the $a_{\xi}$'s are complex coefficients  and, in order to avoid repetitions, the sum is constrained by the relation $\xi\sim\eta$ if and only if $\xi_1=\pm \eta_1$ and $\xi_2= \pm \eta_2$. 

Boundary adapted Arithmetic Random Waves  (BARW) are functions $f_n$ as in \eqref{function} where the $a_{\xi}$'s are i.i.d. standard Gaussian random variables. Alternatively,  BARW are the continuous, \textit{non-stationary}, that its law of $f_n$ is not invariant under translations by elements of $\mathbb{R}^2$, Gaussian field with covariance function
\begin{align}
r_n(x,y)= \mathbb{E}[f(x)f(y)]= \frac{16}{N}\sum_{\xi\backslash\sim} \sin (\pi \xi_1 x_1) \sin (\pi \xi_2 x_2)\sin (\pi \xi_1 y_1) \sin (\pi \xi_2 y_2). \label{formula r_n}
\end{align}

For the said model,  Cammarota, Klurman and Wigman \cite{CKW20} found that the expectation of the \textit{global} nodal length of $f_n$ depends on the distribution of the lattice points on the circle of radius $n$. Explicitly, they showed that
\begin{align}
	\mathbb{E}[\mathcal{L}(f_n)]= \frac{\sqrt{n}}{2\sqrt{2}}\left( 1- \frac{1+ \hat{\nu}_n(4)}{16N} + o_{n\rightarrow \infty}\left(\frac{1}{N}\right)\right) \label{global}
\end{align}
where the limit is taken along a density one\footnote{ A sub-sequence $S'\subset S$ is of density one if $\lim\limits_{X\rightarrow \infty} |{n\in S':n\leq X}|/|{n\in S:n\leq X}|=1$. } sub-sequence of eigenvalues,
\begin{align}
	&\nu_n= \frac{1}{N}\sum_{|\xi|^2=n} \delta_{\xi/\sqrt{n}} & \hat{\nu}_n(4)= \int_{\mathbb{S}^1}\cos(4\theta) d\nu(\theta)  \label{spectral measure}
\end{align}
and $\delta_{\xi/\sqrt{n}}$ is the Dirac distribution at the point $\xi/\sqrt{n}\in \mathbb{S}^1$. Moreover, they showed that there exists subsequences of eigenvalues ${n_i}$ such that \eqref{spectral measure} holds and $\hat{\nu}_{n_i}(4)$ attains any value in $[-1,1],$ thus showing that all intermediate ``nodal deficiencies" are attainable.

Let $\mathcal{L}(f_n,s,z)=\vol\{x\in B(s,z): f_n(x)=0\}$, where  $B(s,z)$ is the box of side $s>0$ centered at the point $z\in[0,1]^2$, we prove the following: 
\begin{thm}
	\label{thm:1}
	Let $f_n$ be as in \eqref{function} and $\varepsilon>0$, then there exists a density one sub-sequence of $n\in S$ such that  
\begin{align}
	\mathbb{E}[\mathcal{L}(f_n,s,z)]= \vol (B(s,z)\cap [0,1]^2)\frac{\sqrt{n}}{2\sqrt{2}}\left( 1- \frac{1+ \hat{\nu}_n(4)}{16N} +  o_{n\rightarrow \infty}\left(\frac{1}{N}\right)\right) \label{conclusion}
\end{align}
uniformly for $s>n^{-1/2+\epsilon}$ and $z\in[0,1]^2$. Moreover, for every $a\in[-1,1]$ there exists a subsequence of eigenvalues ${n_i}$ such that $\hat{\nu}_{n_i}(4)\rightarrow a$ and \eqref{conclusion} hold.
\end{thm}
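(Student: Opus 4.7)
The starting point is the Kac--Rice formula. Since $(f_n(x), \nabla f_n(x))$ is a non-degenerate Gaussian vector at every $x \in (0,1)^2$, we may write
\begin{align*}
\mathbb{E}[\mathcal{L}(f_n, s, z)] = \int_{B(s,z) \cap [0,1]^2} K_1(x) \, dx, \qquad K_1(x) = \frac{\mathbb{E}\bigl[\|\nabla f_n(x)\|\,\big|\, f_n(x)=0\bigr]}{\sqrt{2\pi\, r_n(x,x)}}.
\end{align*}
The aim is to show that $K_1(x) = \frac{\sqrt{n}}{2\sqrt{2}}\bigl(1 - \tfrac{1+\hat{\nu}_n(4)}{16N}\bigr) + E_n(x)$, where the remainder satisfies $\int_{B(s,z)} |E_n(x)|\,dx = o\!\bigl(s^2\sqrt{n}/N\bigr)$ uniformly in $z \in [0,1]^2$ and $s > n^{-1/2+\varepsilon}$; the conclusion \eqref{conclusion} is then immediate.

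To isolate the non-stationary content of $K_1$, apply $\sin\alpha\sin\beta = \tfrac12[\cos(\alpha-\beta)-\cos(\alpha+\beta)]$ to \eqref{formula r_n} to obtain the method-of-images decomposition
\begin{align*}
r_n(x,y) = c_n(x-y) - c_n(x-y^{(1)}) - c_n(x-y^{(2)}) + c_n(x-y^{(3)}),
\end{align*}
where $c_n(u) := N^{-1}\sum_{|\xi|^2=n}\cos(\pi \xi \cdot u)$ is the stationary ARW covariance and $y^{(1)}, y^{(2)}, y^{(3)}$ are the reflections of $y$ through the coordinate axes and through the origin. Setting $y = x$ and differentiating, the covariance matrix of $(f_n(x), \nabla f_n(x))$ splits into a stationary \emph{bulk} piece, depending only on $c_n$ and its derivatives at the origin, plus oscillatory \emph{image} pieces depending on $c_n$, $\nabla c_n$ and $\nabla^2 c_n$ evaluated at the points $(2x_1,0)$, $(0,2x_2)$, $(2x_1,2x_2)$. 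A Taylor expansion of the Gaussian expectation in powers of $1/N$ around the stationary limit produces the main term $\sqrt{n}/(2\sqrt{2})$; the first correction coming from the bulk piece reproduces, by essentially the computation performed for the global case in \cite{CKW20}, the factor $-(1+\hat{\nu}_n(4))/(16N)$, while all contributions of the image pieces are collected into $E_n(x)$.

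The main technical task is therefore to prove $\int_{B(s,z)} |E_n(x)|\,dx = o\!\bigl(s^2\sqrt{n}/N\bigr)$ uniformly in $z$ at every scale $s > n^{-1/2+\varepsilon}$. Each image contribution is of the form $N^{-1}\sum_{|\xi|^2=n} \alpha(\xi)\, \psi_\xi(x)$ with $\psi_\xi$ a trigonometric polynomial whose frequencies are built from the coordinates of $\xi$, and integration against the indicator of $B(s,z)$ produces sinc-type factors of size $O\!\bigl(\min(s, 1/|\xi_j|)\bigr)$ in each coordinate. The naive term-by-term bound is insufficient because lattice points $\xi$ with small $|\xi_1|$ or $|\xi_2|$ contribute disproportionately. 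What is required is a uniform upper bound, valid along a density-one sub-sequence of $n \in S$, on the number of lattice points with $|\xi|^2 = n$ lying in short arcs of $\sqrt{n}\,\mathbb{S}^1$ --- in particular, in the arcs about the coordinate axes --- that is strong enough to absorb the image-term contributions at all scales down to Planck. Establishing this ``lattice points in small arcs'' estimate is the heart of the argument and the step we expect to be the principal obstacle.

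The remaining assertion --- that for any $a\in[-1,1]$ one can find a sub-sequence $\{n_i\}$ with $\hat{\nu}_{n_i}(4)\to a$ on which \eqref{conclusion} still holds --- follows by intersecting the sub-sequences constructed in \cite{CKW20} for prescribing $\hat{\nu}_{n_i}(4)$ with the density-one sub-sequence on which the lattice-point estimate above is valid; the resulting intersection remains of density one inside the former, which is sufficient.
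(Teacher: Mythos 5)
Your proposal starts correctly with Kac--Rice and the method-of-images decomposition is a reasonable reformulation of the covariance, but there are several substantive gaps. First, the attribution of the $-(1+\hat{\nu}_n(4))/(16N)$ correction to the ``bulk piece'' $c_n(x-y)$ is wrong: $c_n$ is stationary, so at $x=y$ it gives $\Var f = 1$ identically and a constant gradient covariance $\propto nI_2$; the stationary ARW on the torus has \emph{zero} correction at this order. The whole $1/N$ correction comes from the non-stationary image pieces, which must therefore be averaged to leading order and not simply absorbed into an $o(s^2\sqrt{n}/N)$ error. This is exactly what the paper does via the expansion of $K_1$ (Proposition~\ref{prop: asymptotic zero density}) and the explicit averages in Lemma~\ref{lem: calculations}. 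Second, you identify the obstruction as a bound on \emph{single} lattice points in short arcs near the axes (which is Lemma~\ref{cor:small representations} of the paper and would only control first moments of the image terms). But to control the Taylor remainder and the region where the expansion breaks down, one needs higher moments of $s_n(x)$ and $\Gamma_n(x)$, and these produce sums over $\ell$-tuples of lattice points with sinc factors in $\bigl|\sum_{i}\xi_t^i\bigr|$; the decisive input is therefore the semi-correlation result (Theorem~\ref{thm: semi spectral correlations}) asserting that these sums cannot be nonzero and small, not just a bound on lattice points near the axes.

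Two further issues: (i) the Taylor expansion ``around the stationary limit'' is only valid where the non-stationary deviations are small, which fails near the boundary (and in fact $\Var f(x) \to 0$ there); this requires the explicit singular-set decomposition and the deterministic nodal-length bound of Lemma~\ref{lem: bound on nodal length} (and, before that, reduction to square-free $n$ to remove the deterministic grid), none of which is addressed. (ii) For the ``Moreover'' part, intersecting the sparse subsequence of $n_i$ from \cite{CKW20} realizing $\hat{\nu}_{n_i}(4)\to a$ with a density-one subsequence of $S$ need not be infinite --- a density-one set can avoid any prescribed density-zero set entirely. The paper instead constructs the subsequence from scratch (Lemma~\ref{lem: control lattice points}) so that the prescribed angular distribution and the semi-correlation conclusion hold simultaneously.
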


The main new ingredient in the proof of Theorem \ref{thm:1} is the study of the distribution of lattice points in small arcs, which we shall now briefly discuss.

\subsection{Semi-correlations and lattice points in shrinking sets}
\label{number theory}
The study of the nodal length in the Boundary adapted Arithmetic random wave model is intimately connected to the following general result which we shall establish in the next sections.
\begin{thm}
	\label{thm: main correlation}
	Let $\ell>0$ be an even integer and let $\overline{v}$ be any fixed directional vector in $\mathbb{R}^2.$ Let $\text{P}_{\overline{v}}:\mathbb{R}^2\to\mathbb{R}^2$ denote the operator of projection on the subspace generated by $\overline{v}.$
	Let $N=N(n)$ be the number of points $\xi_i=(\xi_i^1,\xi_i^2)\in\mathbb{Z}^2$ with $|\xi_i^1|^2+|\xi_i^2|^2=n.$ Then for any given $\varepsilon>0$  there exists a density one sub-sequence of $n\in S$ so that the inequality
	\begin{equation}\label{main correlation}
		\left| \text{P}_{\overline{v}}(\sum_{m=1}^{\ell}\xi_i)\right| \le {n^{1/2-\varepsilon}}
	\end{equation}
	has $\frac{\ell!}{2^{\ell/2}(\ell/2)!}N^{\ell/2}+o(N^{\ell/2})$ solutions.
\end{thm}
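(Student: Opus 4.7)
The plan is to isolate the leading ``diagonal'' contribution and then control two types of errors separately.

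\textbf{Diagonal main term.} Call a tuple $(\xi_1,\dots,\xi_\ell)$ \emph{diagonal} if the multiset $\{\xi_1,\dots,\xi_\ell\}$ decomposes into $\ell/2$ antipodal pairs $\{v,-v\}$; any such tuple satisfies $\sum_i\xi_i=0$, so \eqref{main correlation} holds trivially. There are $(\ell-1)!!=\ell!/(2^{\ell/2}(\ell/2)!)$ perfect matchings of $\{1,\dots,\ell\}$, and each matching contributes $N^{\ell/2}$ tuples upon choosing one lattice point per pair (the opposite vector is automatically on the circle). Tuples counted by more than one matching require accidental coincidences between different pairs and number $O(N^{\ell/2-1})$, which is absorbed in the error.

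\textbf{Non-diagonal exact correlations.} The remaining tuples split into two families. The first consists of tuples with $\sum_i\xi_i=0$ but not of diagonal type---the non-trivial $\ell$-correlations of lattice points on the radius-$\sqrt n$ circle. Density-one bounds of Bombieri--Bourgain type (with their higher-$\ell$ extensions in recent work on Gaussian integer statistics) bound their number by $o(N^{\ell/2})$, possibly after shrinking the density-one sub-sequence depending on $\ell$ and $\varepsilon$.

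\textbf{Genuine semi-correlations via arc counting.} The harder family consists of tuples with $\sum_i\xi_i\ne 0$ but $|\text{P}_{\overline{v}}(\sum_i\xi_i)|\le n^{1/2-\varepsilon}$. For these I would condition on $\xi_1,\dots,\xi_{\ell-1}$ and note that the admissible $\xi_\ell$ lies on the circle $|\xi|^2=n$ in at most two arcs of angular width $\sim n^{-\varepsilon}$, whose centres are determined by the preceding $\xi_i$ and by $\overline{v}$. Summing the arc-count over $(\xi_1,\dots,\xi_{\ell-1})$ gives
\[
\#\{\text{semi-correlations}\} \;\le\; N^{\ell-1}\cdot \max_{\alpha}\#\bigl\{\xi:|\xi|^2=n,\ \arg(\xi)\in[\alpha-n^{-\varepsilon},\alpha+n^{-\varepsilon}]\bigr\}.
\]
Along a density-one subsequence, the angles of Gaussian integers equidistribute on scales down to $n^{-\varepsilon}$, so each arc contains at most $N\cdot n^{-\varepsilon}+O(1)$ points; this gives $N^\ell n^{-\varepsilon}+N^{\ell-1}$, which is $o(N^{\ell/2})$ provided $N^{\ell/2}\ll n^\varepsilon$---comfortably true along the density-one subset where $N$ has the typical Erd\H{o}s--Hall size $(\log n)^{O(1)}$.

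\textbf{Main obstacle.} The heart of the matter is precisely the uniform ``small arc'' estimate: one must show that after removing a density-zero exceptional set of $n \in S$, every arc on the radius-$\sqrt n$ circle of angular length $n^{-\varepsilon}$ contains at most $N\cdot n^{-\varepsilon}(1+o(1))+O(1)$ lattice points, uniformly in the arc's centre and in the direction $\overline{v}$. I would approach this through a high-moment estimate of the form $\sum_{n\le X}\bigl(\max_\alpha\#\{\xi:\arg\xi\in\text{arc}\}\bigr)^{2k}$ combined with the density-one angular equidistribution of Gaussian integers. Since lattice points on a circle can cluster on sparse, pathological values of $n$ (for which $|\sum_\xi e^{2\pi i t\xi\cdot\overline{v}}|$ stays as large as $N$), the density-one hypothesis is used precisely to discard such $n$, and controlling this exceptional set uniformly in $\overline{v}$ at the delicate scale $n^{-\varepsilon}$ is the principal technical hurdle.
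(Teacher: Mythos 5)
Your decomposition into a diagonal main term, non-diagonal exact correlations, and genuine semi-correlations is sensible, and the diagonal count $\ell!/(2^{\ell/2}(\ell/2)!)\cdot N^{\ell/2}$ is correct. However, the treatment of the third family has a genuine gap that cannot be repaired within the arc-counting framework you propose.

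First, an arithmetic problem. Conditioning on $(\xi_1,\dots,\xi_{\ell-1})$ and bounding the arc count by $N\cdot n^{-\varepsilon}+O(1)$ gives, as you compute, $N^{\ell}n^{-\varepsilon}+N^{\ell-1}$. The first summand is indeed $o(N^{\ell/2})$ when $N\ll n^{\varepsilon/\ell}$, but the second summand is $N^{\ell-1}$, and $\ell-1>\ell/2$ for every even $\ell\ge 4$. Thus $N^{\ell-1}$ \emph{dominates} the main term $N^{\ell/2}$; it is not an admissible error. The ``$+O(1)$'' per arc, once multiplied by $N^{\ell-1}$ choices of the preceding tuple, destroys the bound. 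No amount of sharpening the leading coefficient $N\cdot n^{-\varepsilon}$ helps, because the obstruction is the discrete floor.

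Second, and more fundamentally, the property you actually need is not ``equidistribution at scale $n^{-\varepsilon}$'' but rather that, along a density-one subsequence, the two arcs determined by any tuple $(\xi_1,\dots,\xi_{\ell-1})$ and any direction $\overline{v}$ contain \emph{no} lattice points at all except the trivial one closing the sum to zero. Equidistribution controls first moments; it says nothing that rules out a single rogue point sitting in an arc of width $n^{-\varepsilon}$. Statements of the flavor ``no two lattice points on the circle $|\xi|^2=n$ are angularly closer than $n^{-\delta}$, for density-one $n$, uniformly'' are close in spirit to the Cilleruelo--Granville conjecture and are not available at the scales and level of uniformity (over arcs and over $\overline{v}$) that your argument requires.

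The paper's proof takes an entirely different route precisely to avoid this obstacle. It first proves (Theorem 1.5 / Proposition 2.5) that for a density-one subsequence the set of \emph{nontrivial} near-cancellations is literally empty: $\mathcal{V}^t(n,\ell,n^{1/2-\varepsilon})=\emptyset$. The mechanism is multiplicative, not geometric: lattice points on the circle factor uniquely into Gaussian primes, and writing $\xi_r=i^{k_r}\prod_j\pi^*_{j,r}$ one isolates the largest Gaussian prime $\pi_s$, rewrites the near-cancellation as a constraint $|\sin(\phi_s+\phi_0)|\ll |\pi_s|^{-2\varepsilon}$ on its argument $\phi_s$, and then counts (via a Bombieri--Bourgain-type presieving lemma on the sizes of the prime divisors) how many $n$ admit a prime with an argument falling in that tiny window. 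This reduces the uniform ``no points in a short arc'' statement to a divisor-sum estimate, which is tractable. Once the near-cancellations are shown to vanish for density-one $n$, the count in Theorem 1.2 collapses to the count of exact correlations $\text{P}_{\overline{v}}(\sum\xi_i)=0$, for which the asymptotic $\ell!/(2^{\ell/2}(\ell/2)!)\cdot N^{\ell/2}+o(N^{\ell/2})$ is imported from the earlier semi-correlation results of \cite{CKW20} (and the full-correlation results of \cite{BMW}, \cite{BB}). A rotation by $-\theta_v$ then handles an arbitrary direction $\overline{v}$.

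In short: the structure of your argument is fine through the exact-correlation step, but the arc-counting bound for the near-cancellations both overcounts by a factor $N^{\ell/2-1}$ and rests on a uniform small-arc repulsion estimate that is neither implied by equidistribution nor currently available at the required uniformity. You need the \emph{emptiness} of nontrivial semi-correlations for density-one $n$, and the known route to this goes through the Gaussian-prime factorization, not through arcs.
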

Theorem \ref{thm: main correlation} has a simple albeit important geometric interpretation: for almost all $n\in S,$ cancellations in the vector sum $\sum_{i=1}^{2\ell}\xi_i$ with $|\xi_i|^2=n$ along any given direction $\overline{v}$ can occur only for trivial reasons, namely when the last $\ell$ vectors form a cyclic permutation of the first $\ell$ vectors with opposite signs.\\
Theorem~\ref{thm: main correlation} refines and strengthens several important results in the subject. 
We highlight that in the case when $P_{\overline{v}}$ is replaced by the identity operator the same conclusion follows from the work of \cite{BMW}, which in turn generalized earlier work by Bombieri and Bourgain \cite{BB} where the right hand side of \eqref{main correlation} was assumed to be identically equal to zero. In our setting, the case when $\overline{v}=(0,1)$ and the right hand side being equal to zero has been treated in \cite{CKW20}. 

To facilitate discussion below, we let $\xi=(\xi_1,\xi_2)$ with $|\xi|^2=n$ and $\mathcal{M}^t(n,\ell)$ be the number of \textit{semi-correlations}, that is solutions to 
\begin{align}
	\xi_t^1+...+\xi_t^{\ell}=0 \label{11}
\end{align}
for $t=1,2$ and $\xi^j=(\xi_1^j,\xi_2^j)$ are representations of $n$ as the sum of two squares. We have the following result, see \cite[Theorem 1.3]{CKW20}:
\begin{lem}
	\label{lem: spectral correlations}
	Let $\ell>0$ be an even integer then for a density one of $n\in S$, we have 
	\begin{align}
		&\mathcal{M}^t(n,\ell)\leq C_{\ell}  N^{\ell/2}  &N \rightarrow \infty \nonumber
	\end{align}
	for some constant $C_{\ell}>0$ and uniformly for $t=1,2$.
\end{lem}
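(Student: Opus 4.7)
Plan. The plan is to reduce the count to a lattice point correlation question and exploit the Gaussian integer structure of $L_n:=\{\xi\in\mathbb{Z}^2:|\xi|^2=n\}$. By the symmetry $(x_1,x_2)\mapsto(x_2,x_1)$ of the square it suffices to treat $t=1$.

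First I would reformulate the count via Fourier orthogonality,
\[
\mathcal{M}^1(n,\ell)=\int_0^1 F_n(\alpha)^\ell\,d\alpha=\sum_{y\in\mathbb{Z}}R_\ell(0,y),
\]
where $F_n(\alpha):=\sum_{\xi\in L_n}e(\alpha\xi_1)$ with $e(x):=e^{2\pi i x}$, and $R_\ell(x,y):=\#\{(\xi^j)_{j=1}^\ell\in L_n^\ell:\sum_j\xi^j=(x,y)\}$. The diagonal term $R_\ell(0,0)$ is exactly the number of full $\ell$-correlations of $L_n$, and by the Bombieri--Bourgain argument \cite{BB} (small $\ell$) and its extension by Benatar--Marinucci--Wigman \cite{BMW} (all even $\ell$), one has $R_\ell(0,0)\ll_\ell N^{\ell/2}$ along a density-one subsequence of $n\in S$; we pass to this subsequence.

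For the off-diagonal contribution $\sum_{y\ne 0}R_\ell(0,y)$ I would use the Gaussian integer parameterization of $L_n$. Along a density-one subset of $S$, $n$ is squarefree with all prime factors $p_j\equiv 1\pmod 4$ and with $\omega(n)\sim\log\log n$, so each $\xi\in L_n$ corresponds to $z=\xi_1+i\xi_2\in\mathbb{Z}[i]$ with $z\bar z=n$, uniquely specified by the choice, for each $p_j\mid n$, of a Gaussian prime $\pi_j$ or its conjugate $\bar\pi_j$ above $p_j$ and by a unit $u\in\{\pm 1,\pm i\}$. The semi-correlation condition becomes the vanishing of $\operatorname{Re}\bigl(\sum_j z^j\bigr)$, equivalently $\sum_j z^j\in i\mathbb{Z}$. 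Adapting the combinatorial/arithmetic argument of \cite{BMW}, one shows that any $\ell$-tuple satisfying this constraint either arises from a perfect matching of $\{1,\ldots,\ell\}$ into pairs $(j_a,j_b)$ with $\xi_1^{j_a}+\xi_1^{j_b}=0$, or forces a nontrivial multiplicative coincidence among products of Gaussian primes dividing $n$, the latter being excluded along a further density-one subset. The trivial (pairing) contribution is counted directly: there are $(\ell-1)!!=\tfrac{\ell!}{2^{\ell/2}(\ell/2)!}$ matchings, and each pair contributes $2N$ choices (free $\xi^{j_a}\in L_n$, then $\xi_1^{j_b}=-\xi_1^{j_a}$ fixes $\xi_2^{j_b}=\pm\sqrt{n-(\xi_1^{j_a})^2}$), for a total $O_\ell(N^{\ell/2})$.

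The main obstacle is the off-diagonal bound: extending \cite{BMW} from the rigid condition $\sum_j z^j=0$ to the weaker $\sum_j z^j\in i\mathbb{Z}$ admits a strictly larger family of potential vanishings. A naive Cauchy--Schwarz
\[
(\mathcal{M}^1(n,\ell))^2\le\#\{y:R_\ell(0,y)>0\}\cdot\sum_y R_\ell(0,y)^2\le(2\ell\sqrt n)\cdot R_{2\ell}(0,0)\ll_\ell\sqrt n\cdot N^\ell,
\]
(using that $\sum_y R_\ell(0,y)^2$ embeds into full $2\ell$-correlations via $\eta^j\mapsto -\eta^j$ and applying \cite{BMW} to $2\ell$) loses a factor of $n^{1/4}$, so sharper input is required: either an arithmetic bound on $\#\{y:R_\ell(0,y)\ne 0\}$ obtained from the rigidity of Gaussian integer sums of norm $n$, or an $L^\infty$ equidistribution estimate for $F_n(\alpha)$ off a sparse set of rational major arcs, in the spirit of Erd\H{o}s--Hall on the angular distribution of lattice points on circles. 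Either route rests ultimately on the arithmetic of Gaussian prime divisors of $n$ for density-one $n\in S$, which is the technical content of \cite{CKW20}.
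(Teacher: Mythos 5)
The paper gives no proof of this lemma: it is imported verbatim from \cite[Theorem 1.3]{CKW20} (note the sentence ``We have the following result, see \cite[Theorem 1.3]{CKW20}'' and the absence of any proof environment). So what you are really being asked to reproduce is the substance of that reference. Your reduction is correct as far as it goes: the orthogonality identity $\mathcal{M}^1(n,\ell)=\int_0^1 F_n(\alpha)^\ell\,d\alpha=\sum_y R_\ell(0,y)$ is accurate, the diagonal term $R_\ell(0,0)$ is the full $\ell$-correlation count controlled by Bombieri--Bourgain and Benatar--Marinucci--Wigman along a density-one subsequence, and the Cauchy--Schwarz computation $(\mathcal{M}^1(n,\ell))^2\le (2\ell\sqrt n)\cdot R_{2\ell}(0,0)$ is correctly set up via $\eta^j\mapsto-\eta^j$.

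The gap is exactly where you flag it, and it is genuine. The central claim --- that any $\ell$-tuple with $\sum_j\xi_1^j=0$ either comes from a perfect matching into pairs with $\xi_1^{j_a}+\xi_1^{j_b}=0$ or forces a multiplicative coincidence among the Gaussian prime factors of $n$ that can be excluded for density-one $n$ --- is asserted, not proved, and it does not follow by a routine adaptation of \cite{BMW}: the constraint $\operatorname{Re}\bigl(\sum_j z^j\bigr)=0$ is a rank-one linear condition on $\mathbb{Z}[i]$, strictly weaker than the rank-two condition $\sum_j z^j=0$ handled in \cite{BMW}, and closing this half-dimensional gap is precisely the new arithmetic content of \cite{CKW20}. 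Your Cauchy--Schwarz fallback loses a factor $n^{1/4}$, which cannot be absorbed since $N=n^{o(1)}$, and you correctly say so. In the end your argument, like the paper's own, defers the essential step to \cite{CKW20}; the self-diagnosis is honest and the bookkeeping is sound, but no independent route to the bound $\mathcal{M}^t(n,\ell)\ll_\ell N^{\ell/2}$ is actually exhibited.
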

Of particular importance in our study of BARW will be the following special case of Theorem \ref{thm: main correlation}: let $K>0$ be a (large) parameter and let $\mathcal{V}^t(n,\ell,K)$ be the number of solutions to 
\begin{equation}\label{inequalities}
	0<|\xi_t^1+...+\xi_t^{\ell}|\leq K 
\end{equation}
for $t=1,2$, then we shall prove the following result.
\begin{thm}
	\label{thm: semi spectral correlations}
	Let $\epsilon>0$ and  $\ell>0$ be an integer. Then, for a density one of $n\in S$, we have 
	\begin{align}
		\mathcal{V}^t(n,\ell,n^{1/2-\epsilon})= \emptyset. \nonumber
	\end{align}
\end{thm}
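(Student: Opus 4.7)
The plan is to bound the exceptional set by a direct counting argument. By the reflection $(\xi_1, \xi_2) \mapsto (\xi_1, -\xi_2)$ acting on the lattice points with $|\xi|^2 = n$, we may restrict to $t = 1$. An exceptional $n \in S \cap [1,X]$ admits some $\ell$-tuple $(\xi^1,\ldots,\xi^\ell)$ of lattice points on the circle $|\xi|^2 = n$ with $\sum_{i=1}^\ell \xi^i_1 = m$ for some nonzero integer $m$ with $|m| \leq n^{1/2-\epsilon}$. Hence the exceptional set up to $X$ is contained in $\bigcup_{0<|m|\leq X^{1/2-\epsilon}} B_m(X)$, where $B_m(X)$ denotes the set of $n \in S \cap [1, X]$ admitting some $\ell$-tuple with first-coordinate sum $m$. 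Since $|S\cap[1,X]| \sim X/\sqrt{\log X}$ by Landau, it suffices to show $\sum_m |B_m(X)| = o(X/\sqrt{\log X})$.

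The cases $\ell = 1$ and $\ell = 2$ we handle directly. For $\ell = 1$, $\xi^1_1 = m$ forces $n = m^2 + b^2$, giving $|B_m(X)| \leq 2\sqrt{X}$ and thus total $O(X^{1-\epsilon})$. For $\ell = 2$, writing $\xi^1 = (a, b_1)$ and $\xi^2 = (m-a, b_2)$, the identity
\[
(b_2 - b_1)(b_2 + b_1) = a^2 - (m-a)^2 = m(2a - m)
\]
reduces the count to a divisor sum: $|B_m(X)| \ll \sum_{|a|\leq \sqrt{X}} d(|m(2a-m)|) = O(X^{1/2+o(1)})$, uniformly in $m$, by standard estimates for the divisor function over arithmetic progressions.

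For $\ell \geq 3$ we exploit the additional perfect-square conditions. Fixing $\xi^1_1,\ldots,\xi^{\ell-1}_1$ and setting $\xi^\ell_1 = m - \sum_{i<\ell}\xi^i_1$, the simultaneous constraints $(\xi^i_2)^2 = (\xi^1_2)^2 + (\xi^1_1)^2 - (\xi^i_1)^2$ for $i \geq 2$ become increasingly restrictive as $\ell$ grows, so the bound $|B_m(X)| = O(X^{1/2+o(1)})$ should persist (with only more room to spare). Summing over $m$ then yields $\sum_{0<|m|\leq X^{1/2-\epsilon}} |B_m(X)| = O(X^{1-\epsilon+o(1)}) = o(X/\sqrt{\log X})$, giving the desired density-zero conclusion.

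The hard part will be establishing the uniform bound $|B_m(X)| = O(X^{1/2+o(1)})$ for all $\ell \geq 3$ and all $m \neq 0$: the case $\ell = 2$ is controlled by classical divisor-sum estimates, but $\ell \geq 3$ produces a system of simultaneous perfect-square conditions whose effective analysis is delicate, as the naive parameter count by $(\xi^1_1,\ldots,\xi^{\ell-1}_1)$ is far too large and one must gain substantially from the additional constraints. This is precisely where the new methodology on lattice points in small arcs highlighted in the introduction should enter, providing the uniform control over how lattice points cluster on the circle $|\xi|^2 = n$ for typical $n$.
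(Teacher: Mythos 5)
Your decomposition of the exceptional set into $\bigcup_{0<|m|\le X^{1/2-\epsilon}} B_m(X)$ is a sensible reduction, and the bounds you give for $\ell=1$ and $\ell=2$ are fine: for $\ell=2$ the divisor-sum estimate $|B_m(X)|\ll \sum_{|a|\le\sqrt{X}} d(|m(2a-m)|) = X^{1/2+o(1)}$ does hold with enough uniformity in $m$. The problem is that for $\ell\ge 3$ you have not given a proof, and this is the entire content of the theorem. The sentence ``the simultaneous constraints $\ldots$ become increasingly restrictive as $\ell$ grows, so the bound $|B_m(X)|=O(X^{1/2+o(1)})$ should persist'' is not an argument: adding more vectors on the circle gives you \emph{more} freedom, not less (for instance you can always insert cancelling pairs $\xi,\,(-\xi_1,\xi_2)$ to reduce to a smaller $\ell$), and the naive parameter count for fixing $\xi_1^1,\ldots,\xi_1^{\ell-1}$ explodes. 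You correctly flag this as ``the hard part,'' but the theorem is that hard part; what you have written is a reduction to the open question, not a proof.

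The paper's actual argument goes a genuinely different route and does not attempt a uniform bound on $|B_m(X)|$ at all. It pre-sieves to the set $\Omega_{M,K}$ of squarefree $n$ with all prime factors $\ge K$, writes each lattice point via its Gaussian-integer factorization $\xi_r=i^{k}\prod_j\pi_{j,r}^*$, and isolates the largest prime $p_s=\pi_s\bar\pi_s$ to rewrite the inequality $|\sum_i\varepsilon_i\,\mathrm{Re}(\xi_i)|\le n^{1/2-\varepsilon}$ as an inequality $|\sin(\phi_s+\phi_0)|\ll|\pi_s|^{-2\varepsilon}$ on the \emph{angle} of $\pi_s$. Conditioning on the smaller prime factors and using the measure of admissible angles in each dyadic range for $p_s$, together with the Bombieri--Bourgain input (Lemma~\ref{bb1}) forcing $p_s\ge 2^{s\Phi(s)}$ outside a small exceptional set, gives the density-zero bound; the general (non-squarefree) case is then deduced by splitting off the $K$-smooth part $n_K$ and summing. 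This multiplicative, angle-localization approach is what makes the counting tractable, and it is the new ingredient you yourself say is missing from your sketch. As written, your proposal does not establish the theorem for any $\ell\ge 3$, nor is it clear that your intended bound $|B_m(X)|=O(X^{1/2+o(1)})$ is even true uniformly in $m$ once $\ell\ge 3$.
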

We will also need the following simple separation result.
\begin{lem}
	\label{cor:small representations}
	Let $\epsilon>0$ then, for a density one of $n\in S$, we have 
	\begin{align}
		|\xi_t|\geq n^{1/2-\epsilon} \nonumber
	\end{align}
	for $t=1,2$ and for all $|\xi|^2=n$. 
\end{lem}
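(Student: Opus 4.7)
The plan is to show that the complement of the asserted density-one subsequence has vanishing relative density inside $S$. Call $n \in S$ \emph{exceptional} at scale $\varepsilon$ if there is a lattice point $\xi = (\xi_1,\xi_2) \in \mathbb{Z}^2$ with $\xi_1^2 + \xi_2^2 = n$ and $\min(|\xi_1|,|\xi_2|) < n^{1/2-\varepsilon}$; the conclusion of the lemma is precisely that for all but a zero-density set of $n \in S$, no such $\xi$ exists.

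The main step is a direct counting bound on the exceptional $n \leq X$. Any such $n$ admits a representation $n = a^2 + b^2$ with $a, b \in \mathbb{Z}$, and by symmetry in the two coordinates (WLOG swapping $\xi_1,\xi_2$) we may assume $|a| < n^{1/2-\varepsilon} \leq X^{1/2-\varepsilon}$. Hence the set of exceptional $n \leq X$ is contained in
\begin{equation*}
\bigl\{ a^2 + b^2 : (a,b) \in \mathbb{Z}^2, \ |a| \leq X^{1/2-\varepsilon}, \ |b| \leq X^{1/2} \bigr\},
\end{equation*}
and the cardinality of this set is trivially bounded by the number of admissible pairs $(a,b)$, which is $\ll X^{1/2-\varepsilon} \cdot X^{1/2} = X^{1-\varepsilon}$.

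To finish, I would invoke the Landau--Ramanujan theorem, which gives
\begin{equation*}
|\{n \in S : n \leq X\}| \sim \frac{B \, X}{\sqrt{\log X}}
\end{equation*}
for the Landau--Ramanujan constant $B > 0$. Therefore the fraction of $n \in S$, $n \leq X$, which are exceptional is at most
\begin{equation*}
\frac{O(X^{1-\varepsilon})}{B \, X / \sqrt{\log X}} \; \ll \; X^{-\varepsilon} \sqrt{\log X} \; \longrightarrow \; 0
\end{equation*}
as $X \to \infty$, which is precisely the density-one statement. There is no real obstacle here: the argument is essentially an elementary lattice-point count combined with the classical density of sums of two squares, and no use of the deeper correlation results (Theorem \ref{thm: main correlation} or Theorem \ref{thm: semi spectral correlations}) is required.
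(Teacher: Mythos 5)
Your argument is correct and is essentially the paper's own proof: bound the number of exceptional $n \leq X$ by a lattice-point count of order $X^{1-\varepsilon}$ and compare against the Landau count $\sim cX/\sqrt{\log X}$ of all sums of two squares up to $X$. The only difference is cosmetic: the paper runs the lattice-point count over dyadic intervals $[k,2k]$ and sums, obtaining $\ll \log X \cdot X^{1-\varepsilon}$, whereas you perform one global count over $n \leq X$ and get $\ll X^{1-\varepsilon}$ directly; both are $o(X/\sqrt{\log X})$, so either version suffices.
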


Finally, we construct sequence of circles satisfying the conclusion of Lemma \ref{cor:small representations} and Theorem \ref{thm: semi spectral correlations} for which we can control the angular distribution:  
 \begin{lem}
 	\label{lem: control lattice points}
 	Let $\epsilon>0$ be given. For any $a\in[-1,1],$ there exists a sequence $\{n_i\}_{i\ge 1},$ with $N_{n_i}\to\infty$ whenever $i\to\infty,$ such that $\widehat{\nu_{n_i}}(4)\to a$  and the conclusions of Theorem \ref{thm: semi spectral correlations}, Lemma \ref{cor:small representations} hold and moreover $\mathcal{V}^t(n_j,l,n_j^{1/2-\epsilon})=\emptyset$.  
 \end{lem}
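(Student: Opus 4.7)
The plan is to combine the density-one arithmetic conclusions of Theorem \ref{thm: semi spectral correlations} and Lemma \ref{cor:small representations} with the explicit constructions of $n\in S$ carrying prescribed angular statistics from \cite{CKW20} (which in turn build on work of Kurlberg--Wigman). Fix $a\in[-1,1]$ and let $\mathcal{G}\subset S$ denote the set on which the conclusions of Theorem \ref{thm: semi spectral correlations} and Lemma \ref{cor:small representations} both hold; by assumption, the complement $\mathcal{B}=S\setminus\mathcal{G}$ has density zero in $S$, i.e.\ $|\mathcal{B}\cap[1,X]|=o(X/\sqrt{\log X})$ as $X\to\infty$, since $|\{n\in S:n\le X\}|\asymp X/\sqrt{\log X}$.

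The core of the argument is a counting step. For each $\delta>0$ and $X>0$ introduce
\begin{equation*}
\mathcal{F}(X,\delta)=\bigl\{n\in S:\, n\le X,\ |\widehat{\nu_n}(4)-a|<\delta,\ N_n>1/\delta\bigr\}.
\end{equation*}
I would first extract from the construction of \cite{CKW20} a quantitative lower bound of the shape
\begin{equation*}
\liminf_{X\to\infty}\frac{|\mathcal{F}(X,\delta)|}{X/\sqrt{\log X}}>0
\end{equation*}
for every $\delta>0$ and every fixed $a\in[-1,1]$. The construction there produces such $n$ as products $n=p_1\cdots p_k$ of primes $p_j\equiv 1\pmod 4$ whose Gaussian angles $\theta_{p_j}\in\mathbb{S}^1$ lie in carefully chosen arcs; Hecke's equidistribution of the $\theta_{p}$ across primes, combined with a routine sieve/Chinese-remainder argument, yields a positive proportion of $n\in S$ with $\widehat{\nu_n}(4)$ in any prescribed window and $N_n=2^k$ as large as we wish.

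Given this lower bound, the density-zero estimate on $\mathcal{B}$ immediately implies $\mathcal{F}(X,\delta)\cap\mathcal{G}\neq\emptyset$ for $X$ sufficiently large. A diagonal extraction then finishes: choose $\delta_j\downarrow 0$ and $X_j\uparrow\infty$, and pick $n_j\in\mathcal{F}(X_j,\delta_j)\cap\mathcal{G}$ with $n_j>n_{j-1}$. By construction, $\widehat{\nu_{n_j}}(4)\to a$, $N_{n_j}\to\infty$, and each $n_j$ satisfies the conclusions of Theorem \ref{thm: semi spectral correlations} and Lemma \ref{cor:small representations}, including in particular $\mathcal{V}^t(n_j,\ell,n_j^{1/2-\epsilon})=\emptyset$ for $t=1,2$.

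The principal obstacle is establishing the positive-density lower bound on $|\mathcal{F}(X,\delta)|$. If the construction in \cite{CKW20} is only phrased as yielding an infinite subsequence rather than a positive-density family, I would replace the density comparison by a probabilistic argument: build $n$ stage-by-stage as a random product of primes sampled from arcs, and show that with probability bounded away from zero the resulting $n$ lies in $\mathcal{G}$, using that the bad set $\mathcal{B}$ contributes a vanishing share at each stage of the prime-by-prime construction. Either avenue produces the desired sequence $\{n_i\}$.
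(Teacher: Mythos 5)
Your overall strategy — combine the density-one statements with an independent construction of integers $n$ having $\widehat{\nu_n}(4)\to a$, then intersect — has a genuine gap that the paper's proof is specifically designed to avoid.

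The set $\mathcal{F}(X,\delta)$ you introduce does \emph{not} have positive density in $S$ once $a\neq 0$. For a density-one subsequence of $n\in S$ the lattice points equidistribute, so $\widehat{\nu_n}(4)\to 0$; consequently, for small $\delta$, the set of $n$ with $|\widehat{\nu_n}(4)-a|<\delta$ and $a$ away from $0$ has density zero. Thus both $\mathcal{F}(X,\delta)$ and the bad set $\mathcal{B}$ are thin, and the comparison $|\mathcal{B}\cap[1,X]|=o(X/\sqrt{\log X})$ tells you nothing about whether $\mathcal{F}(X,\delta)\cap\mathcal{G}$ is nonempty — the construction of $n$'s with prescribed $\widehat{\nu_n}(4)$ could, a priori, land entirely inside the exceptional set of Theorem~\ref{thm: semi spectral correlations} and Lemma~\ref{cor:small representations}. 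Your fallback probabilistic variant has the same problem: the "bad set contributes a vanishing share" only when you sample uniformly from $S$, not when you sample from the thin family of arc-restricted prime products.

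The paper's proof sidesteps density entirely by being fully constructive: it forms $n=p_n^m p$ with $p$ chosen so that $|\widehat{\nu_p}(4)-a|$ is small and $p_n$ with $\arg(\pi_n)$ near $0$, so that the convolution identity $\widehat{\nu_n}(4)=(\widehat{\nu_{p_n}}(4))^m\widehat{\nu_p}(4)$ gives $\widehat{\nu_n}(4)\approx a$. It then \emph{directly verifies} that the semi-correlation inequality has no nontrivial solutions for these specific $n$, by rewriting the constraint as a finite family of trigonometric polynomials $F_m(\phi)$ in the Gaussian angles, choosing $\alpha=\arg(\pi)$ generically so no linear combination $a\sin\alpha+b\cos\alpha$ with small integer $a,b$ vanishes, and then using that each nondegenerate $F_m$ has finitely many zeros together with Kubilius's theorem to find infinitely many $p_n$ with $\arg(\pi_n)$ in a window where all the $F_m$ stay $\gg 1 \gg n^{-\epsilon}$. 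This pointwise, non-vanishing argument is the essential ingredient your proposal is missing; without it, there is no mechanism guaranteeing that the constructed $n$ lie outside the density-zero exceptional sets.
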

 
\subsection{Notation}
We write $A\ll B$ or $A=O(B)$ to designate the existence of a constant $C>0$ such that $A\leq C B$, we denote the dependence the constant $C$ depends on some parameter $\ell$ say, as $A\ll_{\ell}B$. We write $B(s,z)$ for the box centered at $z\in \mathbb{T}^2$ of side length $s>0$. For two integers $m,n$, we write $m|n$ if there exists some integer $k$ such that $n=km$. 
\section{Proof of semi-correlations results}
\subsection{Number theoretic preliminaries}
We will need the following two standard results: the first is the following result due to Kubilius \cite{K} about Gaussian primes, which are primes $\mathcal{P} \subset \mathbb{Z}[i]$ such that $\mathcal{P} \cap \mathbb{Z}=p$ with $p\equiv 1 \pmod 4$. 
\begin{lem}[Kubilius]
	\label{pnt}
	Let $\theta_1,\theta_2\in [0,2\pi]$.  Then, the number of Gaussian primes in the sector $\arg(\mathcal{P})\in [\theta_1,\theta_2]$ such that $|\mathcal{P}|^2\leq X$ is 
	\begin{align}
		\frac{2}{\pi}(\theta_1-\theta_2)\int_{2}^{X}\frac{dx}{\log x} + O (X\exp(-c\sqrt{\log X})). \nonumber
	\end{align}
\end{lem}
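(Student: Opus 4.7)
The plan is to follow the classical approach of de la Vall\'ee Poussin combined with Hecke's theory of characters on $\mathbb{Q}(i)$. First I would introduce, for each integer $k$, the Hecke character $\lambda_k$ on non-zero ideals of $\mathbb{Z}[i]$ defined on principal ideals by
\begin{equation*}
\lambda_k((\alpha)) = \left(\frac{\alpha}{|\alpha|}\right)^{4k},
\end{equation*}
where the exponent is taken to be a multiple of four so that the value is independent of the chosen unit generator. For $k=0$ this recovers the Dedekind zeta function $\zeta_{\mathbb{Q}(i)}$, while for $k\neq 0$ the character is non-trivial and captures the angular distribution of Gaussian primes modulo the action of units.

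Next I would appeal to Hecke's theorem that the associated $L$-function
\begin{equation*}
L(s, \lambda_k) = \sum_{\mathfrak{a}} \lambda_k(\mathfrak{a}) N(\mathfrak{a})^{-s} = \prod_{\mathfrak{p}} \bigl(1 - \lambda_k(\mathfrak{p}) N(\mathfrak{p})^{-s}\bigr)^{-1}
\end{equation*}
admits analytic continuation to $\mathbb{C}$ (entire for $k \neq 0$, and with a simple pole at $s=1$ otherwise) and satisfies a functional equation of the standard $\mathrm{GL}(1)$ shape. Running the classical de la Vall\'ee Poussin argument on $-L'/L(s,\lambda_k)$, combined with the non-vanishing of $L(1+it,\lambda_k)$ on the line $\sigma=1$, yields a zero-free region of the form
\begin{equation*}
\sigma > 1 - \frac{c}{\log(|k|+|t|+2)}
\end{equation*}
for some absolute constant $c>0$, uniform in the parameter $k$.

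With this in hand, a standard truncated Perron argument, shifting the contour to the left edge of the zero-free region, gives the twisted prime counting estimate
\begin{equation*}
\sum_{\substack{\mathcal{P} \text{ Gaussian prime} \\ |\mathcal{P}|^2 \leq X}} \lambda_k(\mathcal{P}) = \mathbf{1}_{k=0}\cdot 4 \int_2^X \frac{dx}{\log x} + O\bigl(X \exp(-c_1\sqrt{\log X})\bigr),
\end{equation*}
uniformly for $|k|\leq \exp(c_2\sqrt{\log X})$, where the factor $4$ counts the unit associates of each prime ideal. I would then Fourier-expand the indicator $\mathbf{1}_{[\theta_1,\theta_2]}(\arg \mathcal{P})$ on the circle, sandwiching it between trigonometric polynomials of degree $K = \lfloor \exp(c_3 \sqrt{\log X}) \rfloor$ with $L^1$ error $O(1/K)$ via a standard majorant/minorant construction (Beurling--Selberg). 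The zero-frequency contribution produces the advertised main term $\tfrac{2}{\pi}(\theta_2-\theta_1)\int_2^X dx/\log x$, each non-trivial frequency contributes $O(X\exp(-c_1\sqrt{\log X}))$, and the $L^1$-approximation error contributes $O(X/K)$, all absorbed into the stated remainder.

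The principal obstacle is establishing the uniform zero-free region in the parameter $k$: the analytic conductor of $L(s,\lambda_k)$ grows polynomially in $|k|$, and one needs the de la Vall\'ee Poussin log-derivative argument carried out with explicit bounds in this conductor (in particular, a convexity estimate along $\re(s)=1/2$ and the $3+4\cos\theta+\cos 2\theta \ge 0$ trick applied to $\zeta_{\mathbb{Q}(i)}(s) L(s,\lambda_k)^2 L(s,\lambda_{2k})$). Once this uniform zero-free region is in place, the Fourier-analytic step and the truncated Perron integration are routine.
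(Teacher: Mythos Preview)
The paper does not prove this lemma at all: it is stated as a standard result due to Kubilius and simply cited from \cite{K}, so there is no in-paper argument to compare against. Your sketch via Hecke Gr\"ossencharacters on $\mathbb{Q}(i)$, a de~la~Vall\'ee~Poussin zero-free region uniform in the frequency parameter, and a Beurling--Selberg approximation of $\mathbf{1}_{[\theta_1,\theta_2]}$ is exactly the classical route (and is essentially what Kubilius did), so the proposal is correct in outline and in spirit.
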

The second  is Landau's Theorem, see for example \cite[Theorem 14.2]{FI}: there exists some explicit constant $c>0$ such that
\begin{align}
	\#\{n\in S': n\leq X\}= c \frac{X}{\sqrt{\log X}}(1+o(1)) \label{landau}.
\end{align}

\subsection{Proof of Lemma \ref{lem: control lattice points}}
\begin{proof}[Proof of Lemma \ref{lem: control lattice points}]
	Fix large $m\ge 1$ and small $\delta>0.$ We apply Lemma~\ref{pnt} to select infinite sequence of primes $p_n$ with the property that $p_{n}=\pi_{n}\bar{\pi_n},$ $p_n=1\pmod 4$ and $|\text{arg}(\pi_n)|\le \frac{\delta}{100m}.$ Furthermore, we choose large prime $p$ with
	\begin{equation}\label{fourier1}
		|\widehat{\nu_{p}}(4)-a|\le \frac{\varepsilon}{2},\end{equation}
	where $\hat{\nu}$ is as in \eqref{spectral measure} and consider the numbers of the form $n=p_n^{m}p.$ For such $p_n$ we have 
	\begin{equation}\label{fourier2}
		|\widehat{\nu_{p_n}}(4)-1|\le \frac{\delta}{2m}\end{equation}
	and $N_n> 2^m.$ 
	From the convolution identity $\widehat{\nu_{n}}(4)=(\widehat{\nu_{p_n}}(4))^m\widehat{\nu_{p}}(4)$ and the triangle inequality deduce the bound
	$$|\widehat{\nu_{n}}(4)-s|\le m|\widehat{\nu}_{p_n}(4)-1|+|\widehat{\nu}_{p}(4)-s|\le m\cdot\frac{\delta}{2m}+\frac{\delta}{2}=\delta.$$ We claim that inequality~\eqref{inequalities} has only trivial solutions for appropriately chosen values of $p_n,p,$ which satisfy~\eqref{fourier1} and~\eqref{fourier2}.\\
	To this end, we let $\pi_n=|\pi_n|e^{i\phi}$ and $p=\pi\cdot\bar{\pi}$ with $\arg{\pi}=\alpha.$
	For a given point $\xi_i$ with integer coordinates and $|\xi_i|=\sqrt{n}$ we write $\xi_j=\sqrt{n}e^{i(j\phi\pm\alpha+r\frac{\pi}{2})}$ for some $|j|\le m$ and $r=\{0,1,2,3\}.$ In these notation we rewrite~\eqref{inequalities} in the form
	\begin{align}\label{eq1}
		\left|\sum_{j=1}^{\ell}\varepsilon_j\cos \left(\l_j\phi\pm\alpha+\frac{\pi r_j}{2}\right)\right|\ll \frac{1}{n^{\epsilon}},
	\end{align}
	where $\varepsilon_j=\{+1,-1\}$ and $|\l_j|\le m$ for $1\le j\le \ell.$ The left hand side of~\eqref{eq1} can therefore be viewed as a trigonometric polynomial	\begin{equation}\label{trigonometric}
		F_{m}(\phi)=\sum_{j=1}^{r}\cos (m_j\phi)(\alpha_j\cos \alpha+\beta_j\sin \alpha)+\sin (m_j\phi)(\alpha^{(1)}_j\cos \alpha+\beta_j^{(1)}\sin \alpha),\end{equation}
	where $1\le m\le \ell$ and $0\le m_1<m_2<\dots m_r$ with $-\ell\le \alpha_j,\alpha_j^{(1)},\beta_j,\beta_j^{(1)}\le \ell$ with the constraints $|F_{m}(\phi)|\ll \frac{1}{n^{\epsilon}}.$
	For any fixed $\ell,$ there are finitely many choices for the coefficients $\alpha_j,\alpha_j^{(1)},\beta_j,\beta_j^{(1)}$
	and therefore we can select angle $\alpha$ for which the corresponding prime $p$ satisfies~\eqref{fourier1} and such that $a\sin{\alpha}+b\cos(\alpha)\ne 0$ for all $a,b\in\mathbb{Z}$ with $|a|+|b|\ne 0$ and $|a|,|b|\le \ell.$ Now since each $F_r(\phi)$ is a trigonometric polynomial of a total degree at most $2\ell,$ each non degenerate equation
	$F_{m}(\phi)=0$
	has at most $2\ell$ solutions. Therefore, the total number of solutions to all such equations is bounded in terms of $m,\ell.$ Consequently, by adjusting appropriate constants and using the uniform continuity, we may find a point $|\phi_0|<\frac{\delta}{200m},$ such that  
	\[|F_{m}(\phi)|\gg_m 1\]
	for all $|\phi-\phi_0|\ll \frac{\delta}{200m}$ uniformly for all polynomials defined above.
	By Lemma \ref{pnt}, there are infinitely many primes $p_n$ with angle $|\phi_0-\phi|\ll \frac{\delta}{200m}.$ For such prime $p_n,$ we  have 
	\[|F_{m}(\arg{\pi_n})|\gg_m 1\gg \frac{1}{n^{\epsilon}}\]
	for sufficiently large $p_n,$ which concludes the proof.
\end{proof}
\subsection{Proof of Theorem \ref{thm: semi spectral correlations}}

We begin by proving Theorem~\ref{thm: semi spectral correlations} in the case of square-free numbers. To this end, for any fixed $K\ge 1$ we introduce the pre-sieved set
\[ \Omega_{M,K}=\{n\le M,\ \text{rad}(n)=n,\ p\vert n\in S\Rightarrow\ p\ge K\},\]
where $\text{rad}(n)=\prod_{p|n} p$, that is the product over primes dividing $n$ without multiplicity, and let $\Omega_M:=\Omega_{M,1}=S\cap [1,M].$ We will need the following lemma borrowed from~\cite{BB}.
\begin{lem}
	\label{bb1}
	For $m\in \Omega_{M,K},$ let $m=p_1\cdotp_2\cdot\dots p_r$ be its factorization with $K< p_1<p_2\dots<p_r.$ Then as $M\to\infty$ we have $p_s>2^{s\Phi(s)}$ for $1\le s\le r$ holds for all $m\in\Omega_{M,K}\setminus \Omega_{M,K}^{(1)},$ where the exceptional set $\Omega_{M,K}^{(1)}$ has cardinality
	\[|\Omega_{M,K}^{(1)}|\le \eta(K, \Phi)|\Omega_{M,K}|\]
	with $\eta(K, \Phi)\to 0$ as $K\to\infty.$ If $\Phi(x)=o(\log x),$ then we can choose $\eta(K,\Phi)=K^{-1+\delta}$ for every fixed $\delta>0.$
\end{lem}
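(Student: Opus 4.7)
The plan is a union bound over the index $s$: for each $s\ge 1$, control the cardinality of $\mathcal{E}_s := \{m\in \Omega_{M,K}: p_s(m)\le 2^{s\Phi(s)}\}$ and sum. The principal inputs are Landau's theorem~\eqref{landau} together with the Mertens-type estimate for primes $\equiv 1\!\pmod 4$, which applies since (for $K>2$) every prime divisor of $m\in\Omega_{M,K}$ lies in $P_1:=\{p\equiv 1\!\pmod 4\}$.

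Fixing $s$ and writing $T_s:=2^{s\Phi(s)}$, I observe that $\mathcal{E}_s=\emptyset$ unless $T_s>K$; otherwise, every $m\in\mathcal{E}_s$ factors uniquely as $m=Q\cdot m'$ with $Q=p_1\cdots p_s$ a product of $s$ increasing primes in $P_1\cap(K,T_s]$, and $m'\le M/Q$ squarefree, coprime to $Q$, lying in $S$, with least prime factor exceeding $p_s$. Landau's theorem bounds the number of admissible $m'$ by $\ll M/(Q\sqrt{\log M})$ uniformly for $Q\le\sqrt{M}$; the range $Q>\sqrt{M}$ contributes negligibly. Summing over admissible prime tuples via
\[\sum_{K<q_1<\cdots<q_s\le T_s,\ q_i\in P_1}\tfrac{1}{q_1\cdots q_s}\le\tfrac{1}{s!}\Bigl(\sum_{K<p\le T_s,\ p\in P_1}\tfrac{1}{p}\Bigr)^s\ll\tfrac{(C\log\log T_s)^s}{s!}\]
yields $|\mathcal{E}_s|\ll |\Omega_M|\,(C\log\log T_s)^s/s!$. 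For $\Phi(s)=o(\log s)$, one has $\log\log T_s\ll\log s$, and Stirling gives decay $(Ce\log s/s)^s$, making the sum over $s$ convergent and small.

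Dividing by the sieve lower bound $|\Omega_{M,K}|\gg_K|\Omega_M|/\sqrt{\log K}$ (itself a consequence of Mertens for $P_1$) yields $|\Omega_{M,K}^{(1)}|\le \eta(K,\Phi)|\Omega_{M,K}|$ with $\eta(K,\Phi)\to 0$ as $K\to\infty$. For the sharpened bound $\eta(K,\Phi)=K^{-1+\delta}$ under $\Phi=o(\log)$, I would insert a Rankin-type weight $(p_1/K)^{1-\delta}$ into the sum over the smallest prime factor: exploiting $\sum_{p>K,\,p\in P_1}p^{-2+\delta}\ll K^{-1+\delta}$ produces the required saving. The main obstacle is maintaining uniform control of the Mertens/Rankin sums as $s$ enters the range $s\Phi(s)\gg\log K$ where $\mathcal{E}_s$ first becomes nonempty, and tracking sieve constants so the $K^{-1+\delta}$ saving survives the union bound; overcounting across $s$ is not an issue because each $m$ admits a unique ordered prime factorization.
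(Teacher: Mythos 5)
The paper does not prove Lemma \ref{bb1}; it is imported verbatim from Bombieri--Bourgain \cite{BB}, so there is no ``paper's own proof'' to compare against. That said, your union-bound skeleton (factor $m=Qm'$, Landau for the $m'$ count, Mertens for the ordered prime tuple, Stirling for the tail) is exactly the natural route and, as far as I recall, is essentially what \cite{BB} does.

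Two remarks on the details. First, you should make explicit that the sum over $s$ is restricted to $s\ge s_0=s_0(K)$, the smallest index with $T_{s_0}:=2^{s_0\Phi(s_0)}>K$; this is the only place the $K$-dependence enters the union bound, and it is what makes $\eta(K,\Phi)$ decay. From $s_0\Phi(s_0)\asymp\log K$ and $\Phi(s_0)=o(\log s_0)$ one gets $s_0\log s_0/\log K\to\infty$, so the dominant term is
\[\Bigl(\frac{Ce\log s_0}{s_0}\Bigr)^{s_0}=\exp\bigl(-s_0\log s_0(1-o(1))\bigr)=K^{-\omega(K)}\quad\text{with }\omega(K)\to\infty,\]
which already beats $K^{-1+\delta}$ with plenty of room to absorb the $\sqrt{\log K}$ from normalising by $|\Omega_{M,K}|$. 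In other words, the ``obstacle'' you flag at the end is real but is resolved by exactly this observation, and the Rankin-type weight you propose is unnecessary overkill here; the crude Mertens bound $\sum_{K<p\le T_s}1/p\ll\log\log T_s$ suffices. Second, you should note (as you begin to) that once $Q>\sqrt{M}$ one must have $s\gg\sqrt{\log M/\Phi(s)}$, so $\omega(m)$ is abnormally large and the contribution of such $m$ is negligible by standard estimates on the number of integers with many prime factors; this is the clean way to justify ``contributes negligibly.'' With those two points spelled out, the argument is complete and correct.
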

The next proposition is crucial and estimates the number of solutions~\eqref{inequalities} for almost all admissible  integers $m\in \Omega_{M,K}.$
\begin{prop}\label{squarefree}
	Let $\varepsilon,\delta>0$ be fixed. If $K\ge K(\delta)$ and $M\to\infty,$ then for all but $K^{-1+\delta}|\Omega_{M,K}|$ elements $m\in\Omega_{M,K}$ we have $\mathcal{V}^t(m,\ell,m^{1/2-\epsilon})= \emptyset.$
\end{prop}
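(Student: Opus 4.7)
The plan is to adapt the Bombieri--Bourgain factorization strategy used for full correlations to the semi-correlation setting. The first step is to re-cast the semi-correlation as an approximate full correlation of doubled length via the conjugation symmetry of $\mathbb{Z}[i]$. Identifying $\mathbb{R}^2$ with $\mathbb{C}$, suppose for contradiction that some tuple $\xi^1, \ldots, \xi^\ell \in \mathbb{Z}[i]$ with $|\xi^j|^2 = m$ realizes
\[
0 < \Big| \re\Big(\sum_{j=1}^{\ell} \xi^j\Big) \Big| \leq m^{1/2-\varepsilon}
\]
(the imaginary-part case is symmetric). Setting $\eta_j := \xi^j$ for $1 \le j \le \ell$ and $\eta_j := \overline{\xi^{j-\ell}}$ for $\ell+1 \le j \le 2\ell$, each $\eta_j$ satisfies $|\eta_j|^2 = m$, and $\sum_{j=1}^{2\ell} \eta_j = 2\re\bigl(\sum_j \xi^j\bigr)$ is a nonzero real integer of absolute value at most $2m^{1/2-\varepsilon}$. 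Thus the semi-correlation produces a genuine approximate full correlation of length $2\ell$, but with the extra structural constraint that the tuple is invariant under the involution $j \leftrightarrow j+\ell$ composed with complex conjugation.

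Next, I would invoke Lemma \ref{bb1} with a sufficiently slowly growing $\Phi$ to restrict to the subset of $m \in \Omega_{M,K}$ whose prime factors $p_1 < p_2 < \cdots < p_r$ grow super-geometrically ($p_s \geq 2^{s\Phi(s)}$), at the acceptable cost of an exceptional set of size at most $K^{-1+\delta}|\Omega_{M,K}|$. Factoring $p_s = \pi_s \bar\pi_s$ in $\mathbb{Z}[i]$, every $\eta$ with $|\eta|^2 = m$ admits the Gaussian decomposition $\eta = u\prod_{s=1}^r \pi_s^{\varepsilon_s}\bar\pi_s^{1-\varepsilon_s}$ for a unit $u \in \{\pm 1, \pm i\}$ and $\varepsilon_s \in \{0,1\}$. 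Grouping the $\eta_j$'s by the value of $\varepsilon_{j,r}$ yields the decomposition
\[
\sum_{j=1}^{2\ell} \eta_j = \pi_r A + \bar\pi_r B, \qquad A, B \in \mathbb{Z}[i], \quad |A|, |B| \leq 2\ell\sqrt{m/p_r}.
\]

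The core step is to show that if $z := \pi_r A + \bar\pi_r B$ is nonzero with $|z| \le 2m^{1/2-\varepsilon}$, then the assignment $j \mapsto \varepsilon_{j,r}$ must split the $\eta_j$'s into matched groups whose $\pi_r$- and $\bar\pi_r$-contributions cancel after extracting one further factor. Reducing modulo $\pi_r$ gives $\bar\pi_r B \equiv z \pmod{\pi_r}$, and symmetrically $\pi_r A \equiv z \pmod{\bar\pi_r}$; combined with the size bounds on $|A|, |B|$ and the fact that $p_r$ dominates $m/p_r$ after the application of Lemma \ref{bb1}, this forces $\pi_r \mid B$ and $\bar\pi_r \mid A$, so $A = \bar\pi_r A'$ and $B = \pi_r B'$ with $A', B' \in \mathbb{Z}[i]$. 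Substituting, $z = p_r(A' + B')$, and the size constraint $|z| \le 2m^{1/2-\varepsilon} < p_r$ forces $A' = -B'$, so the effective equation descends to a semi-correlation of the same shape for the smaller integer $m/p_r$ with the reduced tuple. Iterating the peeling down through $\pi_{r-1}, \ldots, \pi_1$ leaves only the trivial configurations, in which the $\eta_j$ pair as $\xi^j = -\xi^{\sigma(j)}$ or $\xi^j = -\overline{\xi^{\sigma(j)}}$ for some involution $\sigma$; either pairing type forces $\re(\sum_j \xi^j) = 0$, contradicting the initial assumption.

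The main obstacle lies in executing the divisibility step of the third paragraph cleanly and uniformly in the combinatorial data $\{\varepsilon_{j,s}\}$: on size grounds alone, the bound $|A|, |B| \leq 2\ell\sqrt{m/p_r}$ does not immediately forbid small nonzero values of $\pi_r A + \bar\pi_r B$ when both $A$ and $B$ are nonzero, so the argument must appeal quantitatively to the separation of primes guaranteed by Lemma \ref{bb1}. One needs $p_r$ dominant enough compared with $m/p_r$ so that the residue class $\bar\pi_r^{-1} z \pmod{\pi_r}$ contains essentially no representatives of norm comparable to $\sqrt{m/p_r}$ besides the one forced by divisibility. Ensuring that this arithmetic rigidity persists at every level of the induction, while carefully bookkeeping the exceptional sets accumulated at each stage so the total loss remains bounded by $K^{-1+\delta}|\Omega_{M,K}|$, is the technical heart of the proof.
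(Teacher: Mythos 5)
Your opening reduction (doubling to a length-$2\ell$ full correlation via $\eta_j := \xi^j$, $\eta_{j+\ell} := \overline{\xi^j}$, so that $\sum_j \eta_j = 2\re\bigl(\sum_j \xi^j\bigr)$ is a real integer of absolute value $\le 2m^{1/2-\varepsilon}$) is correct and elegant, and your use of Lemma~\ref{bb1} to pre-sieve is in the spirit of the paper. However, the engine of your argument --- forcing $\pi_r \mid B$ and $\bar\pi_r \mid A$ from the smallness of $z = \pi_r A + \bar\pi_r B$, so as to peel off $p_r$ and descend to $m/p_r$ --- has a genuine gap, and it is not merely a bookkeeping technicality. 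Reducing mod $\pi_r$ gives $\bar\pi_r B \equiv z \pmod{\pi_r}$, so $\pi_r \mid B$ would require $z \equiv 0 \pmod{\pi_r}$, i.e.\ $p_r \mid z$ (as $z$ is a rational integer). The hypothesis is that $z$ is a \emph{nonzero} integer with $|z| \le 2m^{1/2-\varepsilon}$, and nothing forces $p_r \mid z$; the size bounds $|A|,|B| \ll \sqrt{m/p_r}$ are compatible with many pairs $(A,B)$ for which $\pi_r A + \bar\pi_r B$ is small but not divisible by $\pi_r$. Moreover, the supporting claim that ``$p_r$ dominates $m/p_r$'' does not follow from Lemma~\ref{bb1}: that lemma only gives a lower bound $p_s > 2^{s\Phi(s)}$ on each prime factor and says nothing about the largest prime factor being close to $m$; indeed for a positive proportion of $m$ the largest prime factor is $m^{c}$ with $c$ bounded away from $1$. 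So the descent step cannot be salvaged on algebraic grounds.

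The paper's proof resolves exactly this difficulty by replacing your rigidity/divisibility claim with a \emph{measure-theoretic} one. Instead of peeling primes, it isolates a ``minimal bad'' integer $n = p_1\cdots p_s \in \mathcal{F}_s$ (one that is in the bad set $\tilde S$ but whose proper divisors are not), writes the correlation as $\re(\pi_s A_{s-1}) + \re(\bar\pi_s B_{s-1})$, and converts the smallness of this expression into the trigonometric inequality $|\sin(\phi_s + \phi_0)| \le |\pi_s|^{-2\varepsilon}$, where $\phi_s = \arg\pi_s$ and $\phi_0$ depends only on $(A_{s-1},B_{s-1})$. The minimality of $n$ (the proper divisor $p_1\cdots p_{s-1}$ being good) supplies the lower bound on the denominator that makes this inequality nontrivial. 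One then \emph{counts}: for each of the $O(2^{\ell s})$ possible pairs $(A_{s-1},B_{s-1})$ the constraint confines $\phi_s$ to a tiny arc, so at most $\ll J^{1-\varepsilon'}$ primes $p_s \in [J,2J]$ can satisfy it, and summing over dyadic $J$ and over $s$ with the lower bound $p_s \ge \max\{K, 2^{s\Phi(s)}\}$ from Lemma~\ref{bb1} gives the exceptional set bound $K^{-1+\delta}|\Omega_{M,K}|$. In short: the correct mechanism is not that small $z$ forces divisibility, but that for fixed $(A,B)$ the set of $\pi_r$ making $z$ small has low density, and the minimal-counterexample device guarantees the lower-level sum is large enough for the sine bound to bite. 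You should replace the inductive peeling by this counting argument.
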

\begin{proof} Let $\tilde{S}\subset S$ such that for every $n\in\tilde{S}$ we have $\mathcal{V}^t(m,\ell,m^{1/2-\epsilon})\ne \emptyset.$ For any prime $p$ we write $p=\pi\cdot\bar{\pi}$ where $\pi$ is the corresponding Gaussian prime with $\text{arg}(\pi)\in [0,\pi/2].$ For any integer $s\ge 1$ we introduce the set
	\[\mathcal{F}_s=\left\{n\in\Omega_{M,K},\ \omega(n)=s,\ n\in \tilde{S};\ \forall d\ne n, d\vert n\Rightarrow d\in S\setminus \tilde{S}\right\}.\]
	Fix $s\ge 1$ and consider $n\in\mathcal{F}_s$ with a given factorization $n=p_1\cdot p_2\dots p_s,$ $K<p_1<p_2<\dots<p_s.$ We have that there exist  integer points $\{\xi\}_{j=1}^{\ell}$ with $\|\xi_j\|=\sqrt{n}$ and $\varepsilon_j\in \{-1,0,1\},$ $1\le j\le \ell$ with
	\[\left|\sum_{j=1}^{\ell}\varepsilon_i\text{Re}(\xi_j)\right|\le {(p_1\cdot p_2\dots\cdot p_s)^{1/2-\varepsilon}}.\]
	Each point $\xi_r$ can be uniquely written as a product $\xi_r=i^{k_{\xi_r}}\prod_{j\le s }\pi_{j,r}^*$ where  each $\pi_{j,r}^*\in \{\pi_j,\bar{\pi}_j\}$ and $k_{\xi_r}\in\{0,1,2,3\}.$
	We now regroup the terms in the last expression by collecting $\pi_s$ and $\bar{\pi_s}$ into different  summands to end up with an equivalent form
	\begin{equation}\label{iterate1}
		\left|\text{Re}(\pi_sA_{s-1})+\text{Re}(\bar{\pi_s}B_{s-1})\right|\le {(p_1\cdot p_2\dots\cdot p_s)^{1/2-\varepsilon}} ,
	\end{equation}
	where each $A_{s-1},B_{s-1}$ consists of the sum of at most $\ell-1$ terms composed of first $(s-1)$ Gaussian primes. Let $\pi_s=|\pi_s|e^{i\phi_s},$ $A_{s-1}=|A_{s-1}|e^{ia_{s-1}}$ and $B_{s-1}=|B_{s-1}|e^{ib_{s-1}}.$ We rewrite inequality \eqref{iterate1} in the form
	\[\left||A_{s-1}|\cos (\phi_s+a_{s-1})+|B_{s-1}|\cos (b_{s-1}-\phi_s)\right|\le \frac{ {(p_1\cdot p_2\dots\cdot p_{s-1})^{1/2-\varepsilon}}}{|\pi_s|^{2\epsilon}},\] which after trigonometric manipulations simplifies to \begin{align}\label{simple_trig}
		|\cos(\phi_s)(|A_{s-1}|\cos (a_{s-1})+|B_{s-1}|\cos (b_{s-1}))&+\sin(\phi_s)(-|A_{s-1}|\sin (a_{s-1})+|B_{s-1}|\sin (b_{s-1}))| \\&\nonumber\le  \frac{ {(p_1\cdot p_2\dots\cdot p_{s-1})^{1/2-\varepsilon}}}{|\pi_s|^{2\epsilon}}.
	\end{align}
	Let $\phi_0\in [0,2\pi)$ be the angle satisfying
	\[\sin(\phi_0)=\frac{|A_{s-1}|\cos (a_{s-1})+|B_{s-1}|\cos (b_{s-1})}{((|A_{s-1}|\cos (a_{s-1})+|B_{s-1}|\cos (b_{s-1}))^2+(|A_{s-1}|\sin (a_{s-1})-|B_{s-1}|\sin (b_{s-1}))^2)^{1/2}}\]
	and 
	\[\cos(\phi_0)=\frac{-|A_{s-1}|\sin (a_{s-1})+|B_{s-1}|\sin (b_{s-1})}{((|A_{s-1}|\cos (a_{s-1})+|B_{s-1}|\cos (b_{s-1}))^2+(|A_{s-1}|\sin (a_{s-1})-|B_{s-1}|\sin (b_{s-1}))^2)^{1/2}}.\]
	With these notations~\eqref{simple_trig} implies the bound	\begin{align*}
		|&\sin (\phi_s+\phi_0)|\\&\le \frac{(p_1p_2\dots p_{s-1})^{1/2-\varepsilon}}{|\pi_s|^{2\epsilon}((|A_{s-1}|\cos (a_{s-1})+|B_{s-1}|\cos (b_{s-1}))^2+(|A_{s-1}|\sin (a_{s-1})-|B_{s-1}|\sin (b_{s-1}))^2)^{1/2}}.
	\end{align*}
	Since
	$n\in \mathcal{F}_s$ we have  $p_1p_2\dots p_{s-1}=\tilde{n}\vert n$ and so by definition $\tilde{n}\in S\setminus\tilde{S}.$ Therefore, 
	$$||A_{s-1}|\cos (a_{s-1})+|B_{s-1}|\cos (b_{s-1}))|=|\text{Re}{(A_{s-1}+B_{s-1})}| \ge  (p_1\cdot p_2\dots\cdot p_{s-1})^{1/2-\varepsilon}$$ unless $A_{s-1}=B_{s-1}=0,$ in which case we end up with a trivial solution. 
	Upon noting that denominator of the above fraction is $\ge (p_1p_2\dots p_{s-1})^{1/2-\varepsilon}$
	we deduce
	\begin{equation*}
		|\sin (\phi_s+\phi_0)|\le \frac{1}{|\pi_s|^{2\varepsilon}}\ll \frac{1}{J^{\varepsilon}},\end{equation*}
	for some $J>0$. Now for a fixed value of $\phi_0,$ by convexity we have $|\sin x|\ge \frac{2}{\pi}x$ for $x\in [0,\pi/2]$ which for $q=\{0,1\}$ yields a measure bound
	\begin{equation}\label{measure}
		|\phi_s+\phi_0+q\pi| \ll \frac{1}{|\pi_s|^{2\varepsilon}.}
	\end{equation}
	We are now ready to estimate the number of $m\in\Omega_{M,K}$ which give rise to a nontrivial solution of~\eqref{inequalities}.
	Applying Lemma~\ref{bb1} allows us to restrict to the case where $m=p_1p_2\dots p_r\in\Omega_{M,K},$ with $K<p_1<p_2<\dots<p_r$ and $p_j\ge 2^{j\Phi(j)}$ for any $1\le j\le r$ and some slowly growing function $\Phi(x)$ to be determined later.
	We observe that, for each such $m$, there exists unique $1\le s\le r$ such that the product $p_1p_2\dots p_s\in \mathcal{F}_s.$
	Given $K<p_1<p_2<\dots <p_{s-1}$ we can form at most $2^{\ell(s-1)}$ sums $A_{s-1}$ and $B_{s-1}$ and thus produce at most $2^{\ell(s-1)}$ distinct $n=p_1p_2\dots p_{s-1}p_s\in\Omega_{M,K}.$\\
	We start by partitioning the range of $p_s$ into dyadic intervals $[J,2J]$ and
	note that there are at most $J^{1-\epsilon'}$ suitable $p_s$ which satisfy \eqref{measure} for any given $\varepsilon'>\varepsilon>0.$ Indeed, we have at most $\ll \sqrt{J}$ choices for the one coordinate and $\ll J^{1/2-\epsilon'}$ choices for the other.\\
	By Lemma~\ref{bb1}, $p_s\ge \max\{2^{s\Phi(s)},K\}$ and therefore the total number of elements in $\tilde{S}\cap [1,M]$ induced by the elements in $\mathcal{F}_s$ is at most
	\begin{align}
		\ll 2^{\ell}\sum_{p_1,p_2\dots p_{s-1}}\sum_{J\le\log M}&\sum_{\max{\{K,2^{s\Phi(s})}\}\le p_s\sim J, p_1\dots p_s\in\mathcal{F}_s}\left|\left\{m\le \frac{M}{p_1p_2\dots p_{s-1}p_s},\ m\in \Omega_{M,K}\right\}\right|
		\\&\nonumber \ll 2^{ls}\sum_{J\le \log M}\sum_{\max{\{K,2^{s\Phi(s})}\}\le p_s\sim\l}\text{I}_{m/p_s\in\Omega_{K,M}}\\
		&\nonumber \ll 2^{ls}\sum_{\max{\{K,2^{s\Phi(s})}\}\le J\le \log M}\frac{M}{J\sqrt {\log {J}}}\cdot J^{1-\varepsilon}
	\end{align}
	where the last estimate comes from ``conditioning'' on at most $J^{1-\varepsilon}$ possible values of $p_s$ and the fact that $m\in\Omega_{K,M}.$ The last sum is clearly bounded above by 
	$\ll 2^{ls}\frac{M}{\max{\{K,2^{\varepsilon' s\Phi(s})}\}}.$
	Since the choice of the function $\Phi(x)$ is at our disposal as long as $\Phi(x)=o(\log x),$ we can follow the  same arguments as in \cite{BB} verbatim with $\Phi(x)$ replaced by $\varepsilon' \Phi(x)$ to arrive at the conclusion.
\end{proof}
We are now ready to handle the general case.

\begin{proof}[Proof of Theorem \ref{thm: semi spectral correlations}]
	Fix large $K>0$ and consider $\mathcal{P}_K=\prod_{p\le K}p.$ Each $n\in\mathbb{N}$ can be written in the form
	$n=n_{K}n_b$ where $(n_b,\mathcal{P}_K)=1$ and $\text{rad}(n_K)\vert \mathcal{P}_K.$ Since the number of $n\in\Omega_M$ for which $p^2|n$ for some prime $p\ge K,$ is bounded above by
	\[\sum_{p>K}|\Omega_{\frac{M}{p^2}}|\ll \sum_{p>K}\frac{M}{p^2\sqrt{\log M}}\ll \frac{M}{K\log K \sqrt{\log M}}\]
	and thus give negligible contribution.
	Consequently, we can restrict ourselves to the set of integers with $n_b$ being square-free. 
	 We now fix $n_K\in\mathbb{N}$ and count the number of $n\in \tilde{S}\cap [1,M]$ with $n_K|n$ and $(\frac{n}{n_K},\mathcal{P}_K)=1.$  More precisely, we would like to count the number of $n,$ which give nontrivial solutions to
	\[|\sum_{i=1}^{\ell}\text{Re}(\alpha_i\xi_i)|\le {n^{1/2-\varepsilon}}\]
	with $\alpha_i\vert n_K$
	and $\|\xi_i\|=\sqrt{n_b}$ for $1\le i\le \ell.$
	We now follow the proof of Proposition~\ref{squarefree} regarding $\alpha_i$ as fixed coefficients. Let $\Omega_{4,1}(n)$ denote the number of prime divisors $p=1(\bmod 4)$ of $n$ counting multiplicity. We have at most $2^{\ell\Omega_{4,1}(n_K)}$ choices for the coefficients $\alpha_i$ and so the number of $n\in \tilde{S}\cap [1,M]$ induced in this way, after appealing to Proposition~\ref{squarefree} is bounded above by
	\begin{align*}
		\sum_{\text{rad}(n_K)\vert \mathcal{P}_K}2^{\ell\Omega_{4,1}(n_K)}|\Omega_{M/(n_K),K}|&\ll \sum_{\text{rad}(n_K)\in\mathcal{P}_K}\frac{4^{k\Omega_{4,1}(n_k)}}{n_K}\left((K^{-1+\delta}+o(1))\frac{M}{\sqrt{\log M}}\right)\\&\ll \left(\frac{(\log K)^{\ell+1}}{K^{1-\delta}}+o(1)\right)\frac{M}{\sqrt{\log M}}\cdot
	\end{align*}
	The result now follows by letting $K\to\infty.$
\end{proof}

We  now briefly point out the modifications required for the proof of Theorem~\ref{thm: main correlation}.
\begin{proof}[Sketch of the proof of Theorem \ref{thm: main correlation}]
	As before, let $\ell>0$ be an even integer and let $\overline{v}$ be our directional vector in $\mathbb{R}^2$ and set $\overline{v}=|\overline{v}|e^{i\theta_v}.$
	We now follow the notations of Proposition~\ref{squarefree} and observe that, upon performing rotation by the angle $-\theta_v,$ our equation~\eqref{main correlation}
	reduces to 
	\[
	\left| \text{Re}(\sum_{m=1}^{\ell}e^{-i\theta_v}\xi_i)\right| \le {n^{1/2-\varepsilon}}.
	\]
	We can now follow the proof of the Proposition~\ref{squarefree} verbatim and note that equation~\eqref{iterate1} would now take a similar form
	\begin{equation*}
		\left|\text{Re}(e^{-i\theta_v}\pi_sA_{s-1})+\text{Re}(e^{-i\theta_v}\bar{\pi_s}B_{s-1})\right|\le {(p_1\cdot p_2\dots\cdot p_s)^{1/2-\varepsilon}}.
	\end{equation*} 
	This in turn would lead to similar expressions for $\sin (\phi_0)$ and $\cos(\phi_0)$ with the corresponding angles $a_s$ and $b_s$ replaced with $a_s-\theta_v$ and $b_s-\theta_v.$ Crucially, the bound~\eqref{measure} remains unchanged which would not affect the rest of the proof.
\end{proof}
Finally, we conclude with a short proof of Lemma \ref{cor:small representations}.
\begin{proof}[Proof of Lemma \ref{cor:small representations}]
	We partition the interval $[1,N]$ into $\sim \log N$ dyadic intervals of the form $[k,2k].$ Let $\xi=(\xi_1,\xi_2)$ with $|\xi|^2=\xi_1^2+\xi_2^2=m$ and $k\le m\le 2k.$ We observe that if $|\xi_t|\le m^{1/2-\epsilon}\le k^{1/2-\epsilon}\le 2 k^{1/2-\epsilon}$ then the number of integers $k\le m\le 2k$ is upper bounded by $\ll k^{1/2} k^{1/2-\epsilon}=k^{1-\epsilon},$ where the first factor comes from the fact that there are at most $k^{1/2}$ choices for one coordinate and at most  $k^{1/2-\epsilon}$ choices for the other. Summing over all such dyadic intervals we see that the total contribution of such $\xi$ is $\ll \log N\cdot N^{1-\epsilon}=o(N/\sqrt{\log N})$, thus Lemma \ref{cor:small representations} follows from Landau's Theorem \ref{landau}.
\end{proof}
\section{Formula for the expectation in shrinking sets}
\subsection{Deterministic grid, reduction to $n$ square-free}
Let us denote by $S':=\{n\in S: n \hspace{2mm} \text{ is square-free}\}$. In this section, we show that, in order to prove Theorem \ref{thm:1}, it is enough to restrict ourselves to $n\in S'$. More precisely, we show that  if $n$ is non-square free, then there exists a deterministic grid where $f_n(x)=0$, see also \cite{CKW20}. However, for most $n\in S$, its contribution is negligible compared to main term in Theorem \ref{thm:1}.

 To see this, let $n\in S$ and write $n= 2^{\alpha_2}\prod_j p_j^{\alpha_j} \prod_k q_k^{\beta_k}$ where $p\equiv 1 \pmod 4$, $q\equiv 3 \pmod 4$ and the $\beta_k$'s are even, and consider the \textquotedblleft fix\textquotedblright part 
\begin{align}
Q= 2^{\alpha_2}\prod_k q_k^{\beta_k} .\label{Q}
\end{align}
Then, letting $\xi=(\xi_1,\xi_2)$ be any lattice point on the circle $|\xi|^2=n$, $Q$ divides both $\xi_1$ and $\xi_2$. Therefore, $f_n$, as in \eqref{function}, vanishes  on the grid 
\begin{align}
\mathcal{G}(Q)= \cup_{k=1}^{\sqrt{Q}}\{x\in [0,1]^2: x_1= \frac{k}{\sqrt{Q}} \hspace{2mm} \text{or} \hspace{2mm} x_2= \frac{k}{\sqrt{Q}} \}.  \nonumber
\end{align}
Since, the length of the grid is 
$$\mathcal{L}(\mathcal{G}(Q))= 2 (Q-1),$$
and for almost all $n\in S$, thanks to the Erdos-Kac Theorem, see for example \cite[Part III
Chapter 3]{Tbook}, we have $Q\leq n^{1/4}\ll (\log n) ^{O(1)}$, its contribution is negligible compared to the main term in the statement of Theorem \ref{thm:1}. Hence, upon rescaling $\xi \rightarrow \xi/Q$, from now on, we assume that $n\in S'$. 
\subsection{Kac-Rice premises}
The aim of this section will be to evaluate the zero density of $f_n$ as defined in Proposition \ref{prop:Kac-Rice} (below) outside a set of \textquotedblleft singular \textquotedblright points. We begin with the following, see also \cite[Lemma 3.1]{CKW20}: 
\begin{prop}
	\label{prop:Kac-Rice}
Let $n\in S'$ and $f_n$ be as in \eqref{function}, moreover define the \textit{zero density} of $f_n$ to be 
\begin{align}
	K_1(x)= \frac{1}{(2\pi)^{1/2}\sqrt{\Var(f(x))}} \mathbb{E}[|\nabla f_n(x)||f_n(x)=0] \nonumber
\end{align}
then 
\begin{align}
\mathbb{E}[\mathcal{L}(f_n,z,s)]= \int_{B(z,s)\cap [0,1]^2} K_1(x) dx. \nonumber
\end{align}
\end{prop}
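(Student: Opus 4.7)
The proposition is a direct instance of the Kac--Rice formula for the length of the zero set of a smooth Gaussian field, so my plan is to verify the standard hypotheses of that formula for $f_n$ on the rectangle $B(z,s)\cap [0,1]^2$ and then invoke the meta-theorem (e.g.\ in the form stated in Adler--Taylor or Aza\"{\i}s--Wschebor). Concretely, the Kac--Rice machinery requires (a) almost sure $C^1$-regularity of $f_n$, (b) non-degeneracy of the Gaussian vector $f_n(x)$ at (almost) every $x$ in the domain, and (c) integrability of the conditional expectation, so that the length of the nodal set can be written as the integral of the zero density $K_1(x)$.

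For (a) there is nothing to prove: $f_n$ is a finite trigonometric polynomial in $x$ with Gaussian coefficients, hence almost surely $C^\infty$. For (c), $|\nabla f_n(x)|$ has all Gaussian moments uniformly in $x$ on the compact set $B(z,s)\cap[0,1]^2$, so the conditional expectation in the definition of $K_1$ is bounded and measurable; the integral is finite.

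The nontrivial step is (b), the non-degeneracy of $f_n(x)$, which is where the reduction to $n\in S'$ made in the previous subsection is used. From the expansion
\[
r_n(x,x)=\operatorname{Var}(f_n(x))=\frac{16}{N}\sum_{\xi\backslash\sim}\sin^2(\pi\xi_1 x_1)\sin^2(\pi\xi_2 x_2),
\]
the variance $r_n(x,x)$ vanishes only if $\sin(\pi\xi_t x_t)=0$ for every lattice point $\xi$ with $|\xi|^2=n$ and for some $t\in\{1,2\}$, i.e.\ if some coordinate $x_t$ lies in the common rational set $\{k/\xi_t: 1\leq k\leq \xi_t,\ |\xi|^2=n\}$. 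Having rescaled $\xi\to\xi/Q$ in the previous subsection and assumed $n\in S'$ square-free, the greatest common divisor of the $\xi_t$'s is $1$, so this degeneracy set reduces to the boundary $\partial[0,1]^2$, which has two-dimensional Lebesgue measure zero. Off this null set, the Gaussian vector $(f_n(x),\nabla f_n(x))\in\mathbb{R}^3$ has a non-degenerate joint distribution, because $\operatorname{Var}(f_n(x))>0$ and the covariance matrix has full rank (one can check this from differentiating $r_n$ twice, or directly from the linear independence of the functions $\sin(\pi\xi_1 x_1)\sin(\pi\xi_2 x_2)$ and their partials at an interior non-degenerate point).

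With (a)--(c) in place, the Kac--Rice formula gives
\[
\mathbb{E}[\mathcal{L}(f_n,z,s)]=\int_{B(z,s)\cap[0,1]^2}p_{f_n(x)}(0)\,\mathbb{E}\bigl[|\nabla f_n(x)|\,\bigm|\,f_n(x)=0\bigr]\,dx,
\]
and since $p_{f_n(x)}(0)=(2\pi\operatorname{Var}(f_n(x)))^{-1/2}$ this matches $K_1(x)$ exactly. The main obstacle in this plan is precisely the verification of non-degeneracy on the complement of a null set; once the reduction to $n\in S'$ eliminates the deterministic grid $\mathcal{G}(Q)$, this becomes essentially a one-line computation, and the rest of the statement is bookkeeping.
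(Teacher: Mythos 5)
Your approach is the same as the paper's: invoke the Kac--Rice formula (the paper cites \cite[Theorem 6.3]{AW09}) and reduce the matter to verifying that $\Var(f_n(x))\neq 0$ away from a Lebesgue-null set, which both proofs accomplish by using that $n$ is square-free. Two small details are left implicit in your write-up that the paper addresses: (i) the ``mixed'' degeneracy case, where $\xi_1 x_1\in\mathbb{Z}$ for some $\xi$ and $\eta_2 x_2\in\mathbb{Z}$ for other $\eta$, which lands $x$ in a finite (hence null) set of interior points, and (ii) the claim $\gcd\{\xi_t : |\xi|^2=n\}=1$, which is the actual content of the square-freeness assumption and which the paper establishes by exhibiting lattice points built from the Gaussian-prime factorization of $n$ (the quick way to see it: the sum over $\xi\backslash\sim$ in \eqref{formula r_n} includes both $(a,b)$ and $(b,a)$, so $d\mid\xi_1$ for all $\xi$ forces $d\mid\gcd(a,b)$ and thus $d^2\mid n$).
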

\begin{proof}
By \cite[Theorem 6.3]{AW09}, it is enough to check that the distribution $f_n(x)$ is non-degenerate for all $x\in B(z,s)$, that is 
\begin{align}
\Var(f(x))= r_n(x,x)= \frac{16}{N}\sum_{\xi\backslash\sim} (\sin (\pi \xi_1 x_1) \sin (\pi \xi_2 x_2))^2\neq 0 \label{1}
\end{align}
for all $x\in B(z,s)$. Since the left hand side of \eqref{1} is a sum of positive terms, if $\Var(f(x)) =0$  then $\xi_1 x_1= r_1\in \mathbb{Z}$ or $\xi_2 x_2= r_2 \in \mathbb{Z}$ for all $\xi$. Now, if  $\xi_1 x_1= r_1\in \mathbb{Z}$ and $\eta_2 x_2= r_2 \in \mathbb{Z}$, for $\xi\neq \eta$ then $x$ belong to a fine set of points and it does not affect the integral. If $ \xi_i x_1= r_i$ for all $\xi$ then choose  $\xi= \prod_j \mathcal{P}_j$ and $\eta= \prod_j \overline{\mathcal{P}_j}$, where $\mathcal{P}_j$ are Gaussian primes lying above the primes $p\equiv 1 \pmod 4$ dividing $n$,  to see that $x_1|Q$, with $Q$ as in \eqref{Q}. This contradicts $n$ being square-free. 
\end{proof}
In order to evaluate the zero density of $f_n$, we borrow the following lemma from from \cite[Lemma 2.2]{CKW20}:
\begin{lem}
\label{lem: expectation formula}
Let $f_n$ be as in \eqref{function} and $x\in [0,1]^2$, then
\begin{align}
\frac{2}{\pi^2 n}\mathbb{E}[\nabla f_n(x)\cdot \nabla^{t} f_n(x)|f_n(x)=0]= I_2 +\Gamma_n(x) \nonumber
\end{align}
where $\nabla^{t}$ denotes the gradient transpose, $I_2$ is the two by two identity matrix and $\Gamma_n$ is given by 
\begin{align}
	\Gamma_n = \frac{8}{n N} \begin{bmatrix}
	b_{11} &  b_{12} \\
	b_{21}   &b_{22}\\
	\end{bmatrix} - \frac{128}{n N^2 \Var (f(x))} \begin{bmatrix}
		d_1^2 &  d_1 \cdot d_2 \\
d_1 \cdot d_2   &  d_2^2\\
	\end{bmatrix} \nonumber
\end{align}
where 
\begin{align}
	&b_{11}(x)= \sum_{\xi\backslash\sim} \xi_1^2(\cos (2\pi \xi_1x_1)-\cos (2\pi \xi_2x_2)- \cos (2\pi \xi_1x_1)\cos (2\pi \xi_2x_2) ) \nonumber \\
	&b_{12}(x)= b_{21}(x)= \sum_{\xi\backslash\sim} \xi_1 \xi_2 \sin(2\pi \xi_1x_1) \cdot \sin(2\pi \xi_2x_2) \nonumber \\
	&b_{22}(x)= \sum_{\xi\backslash\sim} \xi_2^2(\cos (2\pi \xi_1x_1)-\cos (2\pi \xi_2x_2)- \cos (2\pi \xi_1x_1)\cos (2\pi \xi_2x_2) ) \nonumber \\
	&d_1(x)= \sum_{\xi\backslash\sim}\xi_1 \sin(2\pi \xi_1x_1) \cdot \sin(\pi \xi_2x_2)^2 \nonumber \\ 
	&d_2(x)= \sum_{\xi\backslash\sim}\xi_2 \sin(2\pi \xi_2x_2) \cdot \sin(\pi \xi_1x_1)^2 \nonumber. \\
\end{align}
\end{lem}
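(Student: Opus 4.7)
The statement is essentially a direct computation via the standard Gaussian regression (conditioning) formula, combined with some trigonometric bookkeeping. Since $(f_n(x),\partial_1 f_n(x),\partial_2 f_n(x))$ is a centered jointly Gaussian vector, the conditional mean of $\nabla f_n$ given $f_n(x)=0$ is zero and the conditional second moment equals the conditional covariance, which by the regression formula is
\[
\Cov\bigl(\nabla f_n(x)\bigr)\;-\;\frac{1}{\Var(f_n(x))}\,\Cov\bigl(\nabla f_n(x),f_n(x)\bigr)\,\Cov\bigl(\nabla f_n(x),f_n(x)\bigr)^t.
\]
Hence the entire identity reduces to computing the four mixed second derivatives $\partial_{x_i}\partial_{y_j} r_n(x,y)|_{y=x}$ and the two first derivatives $\partial_{x_i} r_n(x,y)|_{y=x}$ of the covariance kernel \eqref{formula r_n}.

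Next I would do the termwise differentiation of \eqref{formula r_n}. For instance,
\[
\partial_{x_1}\partial_{y_1} r_n(x,y)\big|_{y=x}=\frac{16\pi^2}{N}\sum_{\xi\backslash\sim}\xi_1^2\,\cos^2(\pi\xi_1 x_1)\sin^2(\pi\xi_2 x_2),
\]
and analogously for the remaining second and first derivatives. I then apply the elementary identities $\cos^2 t=(1+\cos 2t)/2$, $\sin^2 t=(1-\cos 2t)/2$, and $2\sin t\cos t=\sin 2t$ to decompose each entry into a \emph{constant} contribution (from the ``$1$'' pieces in the double-angle expansions) plus an \emph{oscillating} sum, which I recognize immediately as one of the $b_{ij}(x)$ or $d_i(x)$ defined in the statement, up to the explicit prefactors $\tfrac{4\pi^2}{N}$ and $\tfrac{8\pi}{N}$. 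Crucially, for the off-diagonal second mixed derivative the product $\cos(\pi\xi_1 x_1)\sin(\pi\xi_1 x_1)\sin(\pi\xi_2 x_2)\cos(\pi\xi_2 x_2)$ collapses to $\tfrac14\sin(2\pi\xi_1 x_1)\sin(2\pi\xi_2 x_2)$, which carries no constant piece; only $b_{12}$ survives.

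It remains to evaluate the two diagonal main terms. Using the $\xi_1\leftrightarrow\xi_2$ symmetry of $\{|\xi|^2=n\}$ and that each $\sim$-orbit is generically of size $4$, one gets
\[
\sum_{\xi\backslash\sim}\xi_1^2=\sum_{\xi\backslash\sim}\xi_2^2=\frac{1}{8}\sum_{|\xi|^2=n}|\xi|^2=\frac{Nn}{8},
\]
which produces the main contribution $\tfrac{\pi^2 n}{2}$ on each diagonal entry of $\Cov(\nabla f_n(x))$. After scaling by $\tfrac{2}{\pi^2 n}$, these two diagonal constants assemble into $I_2$; the $b_{ij}$'s combine into the first matrix of $\Gamma_n(x)$ with prefactor $\tfrac{8}{nN}$; and the rank-one regression correction $\tfrac{1}{\Var f_n(x)}\bigl(\tfrac{8\pi}{N}d_i\bigr)\bigl(\tfrac{8\pi}{N}d_j\bigr)$ becomes, after the same scaling, the second matrix of $\Gamma_n(x)$ with prefactor $\tfrac{128}{nN^2\Var f_n(x)}$, exactly matching the stated formula.

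The computation is essentially mechanical; the only genuine obstacle is careful bookkeeping of signs, of factors of $\pi$ versus $2\pi$, and of $\sim$-orbit multiplicities when collapsing the trigonometric expansions into the compact form of the $b_{ij}$'s and $d_i$'s. The minor conceptual point is the vanishing of the off-diagonal constant term flagged above, which is what guarantees that the leading part of the conditional covariance is the clean multiple $\tfrac{\pi^2 n}{2}\,I_2$ of the identity rather than a full dense $2\times 2$ block.
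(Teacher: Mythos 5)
Your proof is correct, and it is the standard derivation. The paper does not actually reproduce a proof of this lemma — it cites \cite[Lemma~2.2]{CKW20} — but your argument (Gaussian regression to reduce the conditional second moment of $\nabla f_n$ to $\Cov(\nabla f_n) - \Var(f_n)^{-1}\Cov(\nabla f_n, f_n)\Cov(\nabla f_n, f_n)^t$, then termwise differentiation of $r_n(x,y)$ in \eqref{formula r_n}, double-angle expansion, and the orbit count $\sum_{\xi\backslash\sim}\xi_1^2 = Nn/8$ to isolate the constant $\tfrac{\pi^2 n}{2}I_2$ from the oscillating $b_{ij}$, $d_i$ pieces with the exact prefactors $\tfrac{4\pi^2}{N}$ and $\tfrac{8\pi}{N}$) is precisely the computation that underlies the cited reference, and the prefactors assemble correctly into $\tfrac{8}{nN}$ and $\tfrac{128}{nN^2\Var(f)}$ after rescaling by $\tfrac{2}{\pi^2 n}$.
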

\subsection{The singular set}
\label{sec:singular set}
Let $f_n$ be as in \eqref{function}, $z\in [0,1]^2$, $s>0$ and $\Gamma_n$ be as in Lemma \ref{lem: expectation formula}. In this section, we want to bound the contributions to $\mathbb{E} [\mathcal{L}(f_n,z,s)]$ coming from points $x\in B(z,s)$ where $\Gamma_n$ is somewhat \textquotedblleft large \textquotedblright. More precisely, we  divide $B(z,s)$ into $O((s\cdot n^{1/2})^2/\delta^2)$ squares $Q_i$ of size $\delta/\sqrt{n}$ for some parameter $\delta>0$ to be chosen later, and say that square $Q_i$ is singular if it contains a point such that 
\begin{align}
\Var (f(x))=: 1- s_n(x)> 1-\gamma \hspace{5mm} \textit{or} \hspace{5mm} |\Tr(\Gamma_n)|\geq \gamma \hspace{5mm} \textit{or} \hspace{5mm} |\det(\Gamma_n)|\geq \gamma \nonumber
\end{align}
for some $\gamma>0$ to be chosen later. We denote by $\mathcal{Q}_{\text{sing}}$ the union of the singular $\mathcal{Q}_i$. 
We then prove the following proposition:
\begin{prop}
\label{prop: singular set}
Let $f_n$ be as in \eqref{function}, $\ell>0$ be an even integer and $K_1$ be as in Proposition \ref{prop:Kac-Rice}. Then, for a density one of $n\in S'$ we have 
$$ \int_{\mathcal{Q}_{\text{sing}}} K_1(x)  dx \ll\frac{s^2\sqrt{n}}{N^{\ell/2-1}}.$$
\end{prop}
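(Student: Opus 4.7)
The plan is to bound $\int_{\mathcal{Q}_{\text{sing}}} K_1\,dx$ by separately estimating the measure of $\mathcal{Q}_{\text{sing}}$ and the size of $K_1$ there, aiming for the sharper bound
\[
\int_{\mathcal{Q}_{\text{sing}}} K_1\,dx \;\ll\; \sqrt n \cdot \frac{s^2}{N^{\ell/2}},
\]
which improves on the stated bound by a factor of $N$. The pointwise bound $K_1(x)\ll \sqrt n$ follows from the Kac--Rice expression of Proposition~\ref{prop:Kac-Rice} together with the trivial estimate $\mathbb{E}|\nabla f_n|^2\ll n$ coming from $|\xi|^2=n$; the only subtlety is that $\Var f_n$ degenerates near $\partial[0,1]^2$, but the corresponding boundary strip has width $\ll n^{-1/2+\varepsilon}$ by Lemma~\ref{cor:small representations} and is handled by a direct estimate using $K_1\ll \sqrt{n/\Var f_n}$ combined with the explicit asymptotics of $\Var f_n$ near the edges.

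For the measure bound, I split the singular set into its three defining subsets and apply Chebyshev's inequality with $L^\ell$ moments to each. The key task is therefore to prove, for $h\in\{s_n,\Tr(\Gamma_n),\det(\Gamma_n)\}$, the estimate
\[
\int_{B(z,s)}|h(x)|^p\,dx\;\ll_\ell\;\frac{s^2}{N^{p/2}},
\]
with $p=\ell$ in the first two cases and $p=\ell/2$ in the third. Chebyshev then gives $|\{|h|\ge\gamma\}\cap B(z,s)|\ll_{\gamma,\ell} s^2/N^{p/2}$, and summing over the three cases $|\mathcal{Q}_{\text{sing}}|\ll_\ell s^2/N^{\ell/2}$.

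The proof of these moment estimates is where Theorem~\ref{thm: semi spectral correlations} enters. Each of $s_n$ and $\Tr(\Gamma_n)$ is, by \eqref{formula r_n} and Lemma~\ref{lem: expectation formula}, a trigonometric polynomial of the shape $(c/N)\sum_{|\xi|^2=n} w(\xi)\,\psi_\xi(x)$ with a bounded weight $w(\xi)$ (the identity $\xi_1^2+\xi_2^2=n$ cancels the $1/n$ prefactor in $\Gamma_n$). Raising to the $\ell$-th power yields an $\ell$-fold sum, and integration against $B(z,s)$ produces a factor $\prod_{t=1,2}\min(s,|k_t|^{-1})$ for $k_t=\sum_j\pm\xi^{(j)}_t$. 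Since $1/s\le n^{1/2-\varepsilon}$, Theorem~\ref{thm: semi spectral correlations} implies that only the exact-cancellation tuples $k_1=k_2=0$ contribute appreciably, and Lemma~\ref{lem: spectral correlations} bounds their number by $O_\ell(N^{\ell/2})$; combined with the prefactor $N^{-\ell}$ and integral size $s^2$ this produces the claimed $s^2/N^{\ell/2}$. For $\det(\Gamma_n)$ I apply the same reasoning to each entry of $\Gamma_n$ (the quantities $d_i$ contribute an extra factor of $n^{\ell/2}$ from the weights $\prod\xi_1^{(j)}$, exactly cancelled by the $(nN^2)^{-\ell/2}$ prefactor), obtaining $\int |\Gamma_{ij}|^\ell \ll s^2/N^{\ell/2}$, and conclude via H\"older's inequality.

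The last step is a deterministic Lipschitz argument to transfer the measure bound from $\{|h|\ge\gamma\}$ to the union of singular squares. Each of $s_n$, $\Tr(\Gamma_n)$ and $\det(\Gamma_n)$ is a trigonometric polynomial with frequencies of magnitude $\ll\sqrt n$, hence Lipschitz with constant $\ll\sqrt n$, so it varies by $O(\delta)$ on any square of side $\delta/\sqrt n$. Choosing $\delta<\gamma/2$, each singular square is contained in the superlevel set at threshold $\gamma/2$, and the measure bound of the previous step applies. The main technical obstacle I anticipate is the precise treatment of the region near $\partial[0,1]^2$ together with the $\det(\Gamma_n)$ term, whose definition involves the potentially small factor $\Var f_n$ in the denominator: this is resolved by dealing with the small-variance subset $\{s_n\ge\gamma\}$ first, so that on its complement the denominator is bounded below by $1-\gamma$ and the analysis of the entries of $\Gamma_n$ proceeds as above.
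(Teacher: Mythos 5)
Your overall architecture — bound the measure of $\mathcal{Q}_{\text{sing}}$ by Chebyshev plus the correlation estimates, then multiply by a bound on $K_1$ — is close in spirit to the paper, and your analysis of the measure bound (the $L^\ell$ moment computation via Theorem~\ref{thm: semi spectral correlations} and Lemma~\ref{lem: spectral correlations}) matches the content of Lemma~\ref{lem: singular set}. The place where you diverge from the paper, and where a genuine gap opens, is the claimed pointwise bound $K_1(x)\ll\sqrt n$. The Kac--Rice density carries a factor $1/\sqrt{\Var f_n(x)}$, and the singular set is \emph{defined} to contain the region where $\Var f_n$ is small, so the only bound that follows from $\mathbb{E}|\nabla f_n|^2\ll n$ is $K_1\ll\sqrt{n/\Var f_n}$, which degenerates exactly where you need it. Near the edge $x_1=0$ one has $\Var f_n(x)\asymp n x_1^2$, so $\sqrt{n/\Var f_n}\asymp 1/x_1$ and $\int_0^{c/\sqrt n}dx_1/x_1$ diverges: no strip-width estimate can rescue $\int_{\text{strip}}\sqrt{n/\Var f_n}\,dx$. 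Your appeal to Lemma~\ref{cor:small representations} also does not control the strip width as claimed — that lemma says $|\xi_t|\ge n^{1/2-\epsilon}$, which does not place the region $\{\Var f_n<1-\gamma\}$ within $O(n^{-1/2+\epsilon})$ of the boundary (the relevant width is $\asymp n^{-1/2}$). To make a uniform $K_1\ll\sqrt n$ work, you would have to show that $\mathbb{E}[|\nabla f_n|\,|\,f_n=0]$ degenerates at the same rate as $\sqrt{\Var f_n}$ near the boundary (which is in fact true, since $f_n\approx \pi x_1\,\partial_1 f_n$ there, but this requires a genuine argument about the conditional law, not the unconditional second-moment bound you invoke).

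The paper sidesteps all of this. Instead of a pointwise bound, it applies Kac--Rice \emph{on each small square} $Q$ of side $\delta/\sqrt n$ to write $\int_Q K_1\,dx=\mathbb{E}[\mathcal{L}(f_n,Q)]$, and then uses the deterministic estimate of Lemma~\ref{lem: bound on nodal length} to get $\mathbb{E}[\mathcal{L}(f_n,Q)]\ll\delta N/\sqrt n$, valid uniformly including near $\partial[0,1]^2$ and regardless of any lower bound on $\Var f_n$. Multiplying by the number of singular squares $\ll n\,\vol(\mathcal{Q}_{\text{sing}})/\delta^2$ and invoking Lemma~\ref{lem: singular set} then gives the stated $s^2\sqrt n/N^{\ell/2-1}$. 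This costs a factor of $N$ compared to your target $s^2\sqrt n/N^{\ell/2}$, but since $\ell$ is a free even parameter (the paper takes $\ell=6$ and needs only $O(N^{-2})$) the loss is immaterial, and the argument is completely insensitive to the degeneration of $\Var f_n$ — which is precisely the issue your proof proposal leaves unresolved.
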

In order to prove Proposition \ref{prop: singular set} we will need two lemmas. The first is the following deterministic bound on the nodal set of Laplace eigenfunctions on the square, see  \cite[Proposition 1.5]{S20}:
\begin{lem}
	\label{lem: bound on nodal length}
Let $f_n$ be as in \eqref{function} and $\varepsilon>0$, then
\begin{align}
	\mathcal{L}(f_n, z,s )s^{-1}\ll s\sqrt{n} + N  \nonumber
\end{align}
uniformly for all $z\in [0,1]^2$ and $s>0$. 
\end{lem}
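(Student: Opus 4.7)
The plan is to extend $f_n$ to a Laplace eigenfunction on the flat torus and then apply a local upper bound on nodal length at each scale. First, I would extend $f_n$ by antisymmetric reflection in each coordinate across the boundary of $[0,1]^2$, producing a real-analytic Laplace eigenfunction $\tilde f_n$ on the torus $\mathbb R^2/(2\mathbb Z)^2$ with eigenvalue $\pi^2 n$; since $\tilde f_n = f_n$ on $[0,1]^2$, it suffices to bound $\mathcal H^1(\tilde f_n^{-1}(0)\cap B(z,s))$ in $\mathbb T^2$.

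The core of the argument is the following local nodal-length estimate for $\tilde f_n$: for all $x\in\mathbb T^2$ and $r>0$,
\[
\mathcal H^1(\tilde f_n^{-1}(0)\cap B(x,r)) \ll r^2\sqrt n + r.
\]
In the regime $r\ge 1/\sqrt n$ this is a local Donnelly--Fefferman-type bound, asserting that the nodal-length density of $\tilde f_n$ is $\ll\sqrt n$ per unit area. It can be established either by the doubling-index machinery applied to the (real-analytic) harmonic extension of $\tilde f_n$, or --- for the Euclidean torus --- via an integral-geometric (Crofton) argument using that $\tilde f_n$ restricted to any affine line is a trigonometric polynomial of degree $\le\sqrt n$ with $O(\sqrt n)$ zeros per unit length, averaged over angular directions to gain an extra factor of $r$. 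In the sub-Planck regime $r<1/\sqrt n$, the Faber--Krahn inequality forces at most one nodal component to intersect $B(x,r)$, and the resulting nodal arc has length $\ll r$.

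Applying the local estimate with $r\asymp s$ and using $N\ge 1$ for any $n\in S$ yields $\mathcal L(f_n,z,s)\ll s^2\sqrt n + s \le s(s\sqrt n+N)$, and dividing by $s$ gives the claim. The main obstacle is obtaining the sharper $r^2\sqrt n$ bound (rather than the easier $r\sqrt n$ Crofton estimate that follows from the degree count alone) in the intermediate regime $r\gtrsim 1/\sqrt n$; this is where one must genuinely exploit the eigenfunction equation and the Euclidean geometry of the torus, and is the key technical input borrowed from \cite{S20}.
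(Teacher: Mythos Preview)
The paper does not supply its own proof; it cites \cite[Proposition~1.5]{S20} directly. Your proposal, however, misidentifies what that input provides. The intermediate estimate you assert,
\[
\mathcal{H}^1\bigl(\tilde f_n^{-1}(0)\cap B(x,r)\bigr)\ll r^2\sqrt{n}+r,
\]
is false for general coefficients $a_\xi$, and the appearance of $N$ on the right-hand side of the lemma is essential. The Dirichlet eigenspace at level $n$ has dimension $N/4$, whereas vanishing to order $k$ at a prescribed point imposes only $2k-1$ independent linear conditions on an eigenfunction (the relation $\Delta f=-\pi^2 n f$ forces the $k$-jet of $f$ to lie in a $(2k{+}1)$-dimensional space). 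Hence one may choose the $a_\xi$ so that $f_n$ vanishes to order $k$ comparable to $N$ at a point $x_0$; the nodal set near $x_0$ then consists of $k$ smooth arcs crossing at $x_0$, and for every $r\lesssim 1/\sqrt{n}$ one has
\[
\mathcal{H}^1\bigl(f_n^{-1}(0)\cap B(x_0,r)\bigr)\gtrsim kr\gtrsim Nr,
\]
contradicting your bound once $N$ is large. The Faber--Krahn step is where the argument breaks: Faber--Krahn lower-bounds the inradius of each nodal \emph{domain}, but says nothing about how many nodal \emph{arcs} may pass through a given small ball---in the example above the $2k$ thin sectorial nodal domains each extend well beyond the Planck scale, so each one individually satisfies the Faber--Krahn constraint.

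One way to obtain the correct bound is integral-geometric: the restriction of $\tilde f_n$ to any line is an exponential sum with at most $N$ distinct frequencies, all lying in $[-\pi\sqrt{n},\pi\sqrt{n}]$, and such a sum has $O(N+s\sqrt{n})$ zeros on a segment of length $s$. Integrating over an $s$-parameter family of parallel lines (or via Crofton) gives $\mathcal{L}(f_n,z,s)\ll s(N+s\sqrt{n})$. The $N$ thus enters through the \emph{number} of Fourier modes rather than the maximal frequency, which is exactly what the Donnelly--Fefferman/Faber--Krahn heuristic overlooks.
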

The second is an estimate on the size of the singular set as follows:
\begin{lem}
	\label{lem: singular set}
Let	$\mathcal{Q}_{\text{sing}}$ be as at the beginning of section \ref{sec:singular set} and and $\ell>0$ be an even integer, then we have  
\begin{align}
	\vol( \mathcal{Q}_{\text{sing}})\ll_{\ell,\gamma} s^2 \max_{t=1,2}\left( \frac{ \mathcal{M}^t(n,\ell) }{N^{\ell}} +  \frac{1}{N^{\ell}}\sum_{\substack{\xi^1,...,\xi^{\ell}\\ \sum_i \xi_t^i\neq 0}}  \frac{1}{s|\sum_i \xi_t^i|}  \right), \nonumber
\end{align}
where the sum is subject to the relation $\xi\sim \eta$ and $M^t(n,\ell)$ is as in Lemma \ref{lem: spectral correlations}.
\end{lem}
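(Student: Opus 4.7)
The plan is to apply a Markov-type $\ell$-th moment bound to a trigonometric polynomial that controls each of the three singular conditions. First I would use product-to-sum formulas on the explicit expressions for $b_{ij}$, $d_i$ from Lemma~\ref{lem: expectation formula} to rewrite each of the three conditions defining $\mathcal{Q}_{\text{sing}}$ in the form $|F(x)|\geq\gamma$, where
\[
F(x)=\frac{1}{N^k}\sum_{\substack{\xi^{(1)},\dots,\xi^{(k)}\\|\xi^{(i)}|^2=n}}c_{\xi}\cos\!\big(2\pi(A_1 x_1+A_2 x_2)\big),\qquad A_t=\sum_{i=1}^{k}\epsilon_i^{(t)}\xi_t^{(i)},
\]
with $|c_\xi|\leq 1$, $\epsilon_i^{(t)}\in\{-1,0,1\}$, and $k\leq 4$ (the value $k=4$ coming from $\det(\Gamma_n)$, which is quartic in the $b_{ij}$, $d_i$). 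The factor $\Var(f)^{-1}$ that appears in $\Gamma_n$ is harmless, since it is bounded on the complement of the variance-singular region, and that region is itself treated by the $s_n$ case.

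Next would come a Lipschitz localisation: each frequency in $F$ is of size $O(\sqrt n)$, so $|\nabla F|\ll\sqrt n$ and $F$ oscillates by at most $O(\delta)$ across any square $Q_i$ of side $\delta/\sqrt n$. Choosing $\delta$ as a small constant multiple of $\gamma$ forces $|F(x)|\geq\gamma/2$ throughout any singular $Q_i$, and Chebyshev's inequality then yields
\[
\vol(\mathcal{Q}_{\text{sing}})\leq\Big(\tfrac{2}{\gamma}\Big)^{\!\ell}\int_{B(z,s)}|F(x)|^{\ell}\,dx.
\]
I would compute the right-hand side by expanding $F^{\ell}$ into a $k\ell$-fold sum of cosines with frequency vectors $(A_1,A_2)=\big(\sum_i\epsilon_i^{(1)}\xi_1^{(i)},\sum_i\epsilon_i^{(2)}\xi_2^{(i)}\big)$ and weights bounded by $N^{-k\ell}$. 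The elementary one-dimensional estimate $\big|\int_{z_t-s/2}^{z_t+s/2}e^{2\pi iA_t x_t}dx_t\big|\ll\min(s,|A_t|^{-1})$ then bounds the contribution of each term to the integral by $\min(s,|A_1|^{-1})\min(s,|A_2|^{-1})$.

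Finally, for either $t\in\{1,2\}$ I would use $\min(s,|A_1|^{-1})\min(s,|A_2|^{-1})\leq s\cdot\min(s,|A_t|^{-1})$ and split the sum by whether $A_t=0$ or not: diagonal tuples contribute $s^2$ each and their total count equals $\mathcal{M}^t(n,k\ell)$, while off-diagonal tuples contribute $s/|A_t|$ each. Summing produces exactly the two-term bound in the statement, after taking the maximum over $t$ and absorbing the harmless inflation $\ell\mapsto k\ell$ into the arbitrary even integer $\ell$ of the lemma. The main technical obstacle is keeping this accounting clean for $\det(\Gamma_n)$, since the quartic dependence inflates the moment exponent by a factor of $4$ and, together with the $\Var(f)^{-1}$ factor, forces the ``excise the variance-singular part first'' strategy outlined above, so that $\Var(f)$ is bounded below on the remaining region and the Chebyshev bound applies uniformly.
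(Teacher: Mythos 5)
Your proposal is correct and follows essentially the same route as the paper's proof: Lipschitz localization so that the relevant trigonometric quantity is uniformly large on any singular square, a Chebyshev/Markov $\ell$-th moment bound, expansion of the power via product-to-sum identities into a sum of $\cos\bigl(2\pi\sum_i v_i\xi^i\cdot x\bigr)$ over $\ell$-tuples of lattice points, and a split between the diagonal tuples (counted by $\mathcal{M}^t$) and the off-diagonal ones (contributing $O(1/(s|\sum_i\xi^i_t|))$ via the oscillatory-integral estimate). The one stylistic difference is that the paper first excises the variance-singular region and then handles the $\Var(f)^{-1}$ factor by a truncated geometric-series expansion $\Var(f)^{-1}=1+O(s_n)$ rather than by your uniform $k\ell$ inflation argument, and it writes out the $s_n$ moment calculation in detail and declares the $\Tr(\Gamma_n)$, $\det(\Gamma_n)$ cases analogous, whereas you package all three conditions into a single framework; both routes lead to the same two-term bound.
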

\begin{proof}
Let us first consider squares $\mathcal{Q}_i$ where $\Var(f(x))> 1-\gamma$, that is $|s_n(x)|\leq \gamma$. Since $|\nabla s|\leq 100 \sqrt{n}$ and $\mathcal{Q}_i$ has size $\delta/\sqrt{n}$, choosing $\delta$ sufficiently small depending on $\gamma$, we may assume that $|s_n(x)|\leq \gamma/2$ for all $x\in \mathcal{Q}_i$. Thus,  by Chebyshev's bound, for any even integer $\ell>0$, we have 
\begin{align}
	\vol \{x\in B(z,s)\cap [0,1]^2: |s_n(x)|\geq \gamma/2\}\leq \frac{2^{\ell}}{\gamma^{\ell}}\int_{B(z,s)\cap [0,1]^2} |s_n(x)|^{\ell} dx. \nonumber
\end{align} 
Re-writing the definition of $r_n(x,x)$ in \eqref{formula r_n} using $\sin (x)^2 = (1/2)(1-\cos2x)$, we see that  
\begin{align}
s_n(x)= \frac{4}{N}\sum_{\xi\backslash\sim} (\cos (2\pi \xi_1x_1)+\cos (2\pi \xi_2x_2)- \cos (2\pi \xi_1x_1)\cos (2\pi \xi_2x_2) ) =: A_n(x_1)+ B_n(x_2)+ C_n(x). \nonumber 
\end{align}
Thus, since $(a+b+c)^{\ell}\ll_{\ell} a^{\ell}+ b^{\ell}+ c^{\ell}$, we have 
\begin{align}
\int_{B(z,s) \cap [0,1]^2} |s_n(x)|^{\ell}dx \ll_{\ell} \int_{B(z,s)\cap [0,1]^2} A_n(x_1)^{\ell} + B_n(x_2)^{\ell} + C_n(x)^{\ell} dx. \nonumber
\end{align}

Let us consider $A_n(x_1)$, using the transformation $x\rightarrow z+sy$, we have 
\begin{align}
\int_{B(z,s)\cap [0,1]^2} A_n(x_1)^{\ell}dx\ll s^2\int_{a(z)}^{b(z)} A_n(z_1+ sy_1)^{\ell}dy_1\leq s^2\int_{-1/2}^{1/2} A_n(z_1+ sy_1)^{\ell}dy_1 , \label{2} 
\end{align}
where $s^{-1}|a(z)|,s^{-1}|b(z)|\leq 1/2$ correspond to the projection of $B(z,s)\cap[0,1]^2$ along the $X$-axis.  Moreover, using the formula $\cos(x+y)= \cos(x)\cos(y)- \sin(x)\sin(y)$, we may write
\begin{align}
	A_n(z_1+sy_1)= \frac{1}{N}\sum_{\xi\backslash\sim} \cos(2\pi \xi_1 z_1)\cos(2\pi \xi_1 s y_1)  - \sin(2\pi \xi_1 z_1)  \sin(2\pi \xi_1 sy_1)\nonumber  \\ 
	=: A_n^1(z_1+sy_1)+ A_n^2(z_1+sy_1). \nonumber     
\end{align}
Thus, using the fact that $(a+b)^{\ell}\ll_{\ell} a^{\ell}+ b^{\ell}$, the RHS of \eqref{2} can be bounded by 
\begin{align}
\int_{-1/2}^{1/2}	A_n(z+ sy_1)^{\ell}dy_1 \ll_{\ell} \int_{-1/2}^{1/2} A_n^1(z+sy_1)^{\ell} + A_n^2(z+sy_1)^{\ell}dy_1. \label{3}
\end{align}
Let us consider $A_n^1$, expanding the $\ell$-th power, we have 
\begin{align}
	\int_{-1/2}^{1/2} A_n^1(z+sy_1)^{\ell} dy_1= \frac{1}{N^{\ell}} \sum_{\xi^1,...,\xi^{\ell}} \prod_{i=1}^{\ell} \cos (2\pi \xi_1^iz_1) \int_{-1/2}^{1/2}  \prod_{i=1}^{\ell}\cos (2\pi \xi_1^i sy_1) dy_1. \nonumber \\
	\leq \frac{1}{N^{\ell}} \sum_{\xi^1,...,\xi^{\ell}} \left|\int_{-1/2}^{1/2}  \prod_{i=1}^{\ell}\cos (2\pi \xi_1^i sy_1) dy_1 \right|\label{4}
\end{align}
Thanks to the formula $2^{\ell-1}\prod_{i=1}^{\ell} \cos (a_i)= \sum_{v\in \{-1,1\}^\ell} \cos \left( \sum_i v_ia_i\right)$ which follows by induction using the formula $\cos(a\cdot b)=(1/2)(\cos (a+b) + \cos (a-b))$, the inner integral on  the right hand side of \eqref{4} can be rewritten as 
\begin{align}
\int_{-1/2}^{1/2}  \prod_{i=1}^{\ell}\cos (2\pi \xi_1^i sy_1) dy_1 = 2^{-\ell+1}\sum_{v\in \{-1,1\}^{\ell}} \int_{-1/2}^{1/2} \cos \left(2\pi s y_1\left( \sum_{i=1}^{\ell} v_i \xi_1^i\right) \right) dy_1 \nonumber
\end{align}
Separating the terms with $\sum_i v_i \xi_1^i=0$ from the others, bearing in mind that the sum is over $\ell$-tuples satisfying the congruence relation $\xi\sim \eta$ if and only if $\xi_1=\pm \eta_1$ and $\xi_2=\pm \eta_2$, we have 
\begin{align}
\sum_{\xi^1,...,\xi^{\ell}}	 \left|\int_{-1/2}^{1/2} \cos \left(2\pi s y_1\left( \sum_{i=1}^{\ell} v_i \xi_1^i\right) \right) dy_1\right|\leq \mathcal{M}^1(n,{\ell}) + \sum_{ \sum \xi_i\neq 0}\left|  \int_{-1/2}^{1/2} \cos \left(2\pi s y_1\left( \sum_{i=1}^{\ell} v_i \xi_1^i\right) \right) dy_1\right|  \nonumber \\
= \mathcal{M}^1(n,{\ell}) + \sum_{\substack{\xi^1,...,\xi^{\ell}\\ \sum_i \xi_1^i\neq 0}} O \left( \frac{1}{s|\sum_i \xi_1^i|}\right) \label{5}
\end{align}
uniformly for all choices of $v\in \{-1,1\}^{\ell}$.  Thus, inserting \eqref{5} into \eqref{4}, we obtain 
\begin{align}
	\int_{-1/2}^{1/2} |A_n^1(z+sy_1)|^{\ell} dy_1\leq  \frac{ \mathcal{M}^1(n,{\ell}) }{N^{\ell}} + \frac{1}{N^{\ell}}\sum_{\substack{\xi^1,...,\xi^{\ell}\\ \sum_i \xi_1^i\neq 0}} O \left( \frac{1}{s|\sum_i \xi_1^i|}\right) .\label{6}
\end{align}
 Inserting \eqref{6} into \eqref{3} and using a similar argument to bound the contribution from $A_n^2(z+sy)$ we have 
\begin{align}
\int_{-1/2}^{1/2}	A_n(z+ sy_1)^{\ell}dy_1 \ll_{\ell} s^2 \max_{t=1,2}\left( \frac{ \mathcal{M}^t(n,\ell) }{N^{\ell}} +  \frac{1}{N^{\ell}}\sum_{\substack{\xi^1,...,\xi^{\ell}\\ \sum_i \xi_1^i\neq 0}} O \left( \frac{1}{s|\sum_i \xi_1^i|}\right)\right). 
\end{align}

A similar argument bounds the contribution form $B_n(x)$ and $C_n(x)$. Therefore, all in all, we have shown that 
 \begin{align}
 \int_{B(z,s)\cap [0,1]^2} |s_n(x)|^{\ell}dx \ll_{l,\epsilon} s^2 \max_{t=1,2}\left( \frac{ \mathcal{M}^t(n,{\ell}) }{N^{\ell}} + \frac{1}{N^{\ell}}\sum_{\substack{\xi^1,...,\xi^{\ell}\\ \sum_i \xi_t^i\neq 0}} \frac{1}{s|\sum_i \xi_t^i|}  \right) \label{final bound}
 \end{align}

We are left with considering squares with $|s_n(x)|\leq \gamma$, but $|\Tr(\Gamma_n)|\geq \gamma$ or $|\det(\Gamma_n)|\geq \gamma$. Again by Chebyshev's bound, for any $\ell>0 \in$ even, we have 
\begin{align}
	\vol \{x\in B(z,s)\cap[0,1]^2: |\Tr(\Gamma_n)|\geq \gamma/2\}\leq \frac{2^{\ell}}{\gamma^{\ell}}\int_{B(z,s)\cap [0,1]^2} |\Tr(\Gamma_n)|^{\ell} dx. \nonumber
\end{align}  
However, as we may assume that $|s_n(x)|\leq \gamma$,  we perform the asymptotic expansion 
$$ \frac{1}{1-s_n(x)}= 1+O\left( s_n(x)^2\right),$$
in the formula for $\Tr(\Gamma_n)$ and observe that bounding moments of  $|\Tr(\Gamma_n)|$ again reduces to computations similar to moments of $s_n(x)$, which we therefore obit. Similarly, we can bound moments of $|\det(\Gamma_n)|$ and Lemma \ref{lem: singular set} follows from \eqref{final bound}.
\end{proof}
We are finally ready to prove Proposition \ref{prop: singular set}:
\begin{proof}[Proof of Proposition \ref{prop: singular set}]
Let $Q$ be a singular square, then Proposition \ref{prop:Kac-Rice} and Lemma \ref{lem: bound on nodal length}, applied with $s=n^{-1/2}\delta$, imply that 
$$\int_Q K_1(x)dx= \mathbb{E}[\mathcal{L}(f_n,Q)]\lesssim \frac{\delta N}{\sqrt{n}}.$$ 
Thus, Lemma \ref{lem: singular set}, bearing in mind that each singular square is counted $n$-times, and taking $\delta,\gamma>0$ to be two small, fixed constants, gives 
\begin{align}
	\label{2.3.1}\int_{\mathcal{Q}_{\text{sing}}} K_1(x)dx\ll s^2 \frac{\sqrt{n}}{N^{\ell-1}}\max_{t=0,1} \left( \mathcal{M}^t(n,\ell) + \sum_{\substack{\xi^1,...,\xi^{\ell}\\ \sum_i \xi_t^i\neq 0}}  \frac{1}{s|\sum_i \xi_t^i|}  \right)
	\end{align}
where $\ell>0$ is an even integer. Hence, in light of the fact that $N\ll n^{\varepsilon}$ for all $\varepsilon>0$, Proposition \ref{prop: singular set} follows from \eqref{2.3.1} together with  Lemma \ref{lem: spectral correlations} and Theorem \ref{thm: semi spectral correlations}. 
\end{proof}
\section{Proof of Theorem \ref{thm:1}}
In order to complete the proof of Theorem \ref{thm:1}, we need to evaluate the integral of $K_1$, as in Proposition \ref{prop:Kac-Rice}, outside the singular set. This will be the content of the next section: 
\subsection{Asymptotic expansion outside the singular set}
The following proposition follows from Lemma \ref{lem: expectation formula} and a standard calculations about the expectation of a two dimensional Gaussian random variable, see also \cite[Proposition 2.7]{CKW20}:
\begin{prop}
	\label{prop: asymptotic zero density} 
	Let $K_1(x)$ be as in Proposition \ref{prop:Kac-Rice}, then for $x\in [0,1]^2\backslash \mathcal{Q}_{\text{sing}}$ we have 
	\begin{align}
K_1(x)= \frac{ \sqrt{n}}{2\sqrt{2}} + L_n(x) + \Upsilon_n(x) \nonumber
	\end{align}
	where 
	\begin{align}
		L_n(x)= \frac{\pi \sqrt{n}}{4\sqrt{2}}\left( s_n(x)+ \frac{\Tr \Gamma_n}{2}+ \frac{3}{4}s_n^2 + \frac{1}{4}s_n(x) \cdot \Tr \Gamma_n - \frac{\Tr (\Gamma_n^2)}{16}- \frac{(\Tr \Gamma_n)^2}{32}\right) \nonumber
	\end{align}
	and 
	\begin{align}
|\Upsilon_n(x)|\ll \sqrt{n} \left(|s_n(x)|^3 + |\Gamma_n(x)|^3\right) \nonumber
	\end{align}
\end{prop}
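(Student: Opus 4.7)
The plan is to apply the Kac–Rice formula from Proposition \ref{prop:Kac-Rice} and then Taylor expand the resulting Gaussian expectation in the two small parameters $s_n(x)$ and $\Gamma_n(x)$, both of which are bounded by $\gamma$ on $\mathcal{Q}_{\text{sing}}^c$ by the very definition of the singular set.

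First I identify the conditional law. Because $(f_n(x),\nabla f_n(x))$ is jointly centered Gaussian, the conditional distribution of $\nabla f_n(x)$ given $f_n(x)=0$ is again centered Gaussian, and by Lemma \ref{lem: expectation formula} its covariance matrix equals $\Sigma(x)=\frac{\pi^2 n}{2}(I_2+\Gamma_n(x))$. By orthogonal invariance of the Euclidean norm, $\mathbb{E}[|\nabla f_n(x)|\,|\,f_n(x)=0]$ depends only on the eigenvalues $\mu_i=\frac{\pi^2 n}{2}(1+\lambda_i)$ of $\Sigma$; after diagonalizing and switching to polar coordinates this expectation equals
$$\frac{\pi\sqrt{n}}{\sqrt{2}}\sqrt{\tfrac{\pi}{2}}\cdot\frac{1}{2\pi}\int_0^{2\pi}\sqrt{1+\lambda_1\cos^2\phi+\lambda_2\sin^2\phi}\,d\phi.$$
Combined with the Kac–Rice prefactor $(2\pi(1-s_n(x)))^{-1/2}$, this reduces the problem to Taylor expanding two scalar quantities.

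Second, on $\mathcal{Q}_{\text{sing}}^c$ the smallness $|s_n|,|\lambda_i|\ll\gamma$ makes both expansions legitimate: $(1-s_n)^{-1/2}=1+s_n/2+3s_n^2/8+O(|s_n|^3)$, and $\sqrt{1+u}=1+u/2-u^2/8+O(|u|^3)$ for $u=\lambda_1\cos^2\phi+\lambda_2\sin^2\phi$. Term-wise integration using the standard moments $\int_0^{2\pi}\cos^2\phi\,d\phi/(2\pi)=1/2$, $\int_0^{2\pi}\cos^4\phi\,d\phi/(2\pi)=3/8$ and $\int_0^{2\pi}\cos^2\phi\sin^2\phi\,d\phi/(2\pi)=1/8$ converts the quadratic contribution into a polynomial in $\Tr\Gamma_n$, $(\Tr\Gamma_n)^2$ and $\det\Gamma_n$; the identity $\det\Gamma_n=\frac{1}{2}((\Tr\Gamma_n)^2-\Tr(\Gamma_n^2))$ then rewrites this in the basis $\{\Tr\Gamma_n,\,(\Tr\Gamma_n)^2,\,\Tr(\Gamma_n^2)\}$ used in the statement.

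Finally, I multiply the two expansions, collect all monomials of total degree at most two in $(s_n,\Gamma_n)$, and match them against the stated formula for the leading constant and for $L_n(x)$; the coefficients should agree term by term by direct comparison. The cubic remainders in both expansions, multiplied by the $O(\sqrt{n})$ Kac–Rice prefactor, produce the claimed bound $|\Upsilon_n(x)|\ll\sqrt{n}(|s_n(x)|^3+|\Gamma_n(x)|^3)$. The argument is routine bookkeeping; the only mildly delicate step is correctly distributing the $\det\Gamma_n$ contribution arising from $\int u^2$ between $(\Tr\Gamma_n)^2$ and $\Tr(\Gamma_n^2)$ so as to match the coefficients $-1/32$ and $-1/16$ in the statement, but this is a purely algebraic check with no further analytic input.
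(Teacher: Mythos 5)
Your proposal is correct and is essentially the same argument the paper has in mind; the paper does not actually spell out the proof, saying only that the proposition ``follows from Lemma~\ref{lem: expectation formula} and a standard calculation about the expectation of a two dimensional Gaussian random variable'' and pointing to [CKW20, Proposition~2.7]. Your chain of steps --- read off the conditional covariance $\tfrac{\pi^2 n}{2}(I_2+\Gamma_n)$ from Lemma~\ref{lem: expectation formula}, write $\mathbb{E}[|\nabla f_n|\mid f_n=0]$ via polar coordinates as $\sqrt{\pi/2}\cdot\frac{\pi\sqrt n}{\sqrt2}\cdot\frac{1}{2\pi}\int_0^{2\pi}\sqrt{1+\lambda_1\cos^2\phi+\lambda_2\sin^2\phi}\,d\phi$, multiply by $(2\pi(1-s_n))^{-1/2}$, Taylor-expand both factors to second order (legitimately, since $|s_n|,\|\Gamma_n\|\le\gamma$ off $\mathcal Q_{\text{sing}}$), and rewrite $\det\Gamma_n=\tfrac12((\Tr\Gamma_n)^2-\Tr(\Gamma_n^2))$ --- is precisely that standard computation, and the cubic Lagrange remainders give the claimed $\Upsilon_n$-bound. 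One small point worth flagging (a bookkeeping check you should make when matching coefficients, not a flaw in your approach): with the expansion starting from $(1-s_n)^{-1/2}=1+\tfrac12 s_n+\cdots$, the ratio of the $s_n$-coefficient in $L_n$ to the leading constant should be $\tfrac12$; comparing $\frac{\pi\sqrt n}{4\sqrt2}$ with $\frac{\sqrt n}{2\sqrt2}$ in the statement gives $\tfrac{\pi}{2}$ instead, which suggests the paper's stated leading constant and the $L_n$-prefactor differ by a factor of $\pi$ (a normalization/typo issue in the source, inherited from the $\frac{\sqrt n}{2\sqrt2}$ of Theorem~\ref{thm:1}), so do not be alarmed if your carefully tracked constants disagree with the displayed leading term by exactly a factor of $\pi$.
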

Therefore, in order to evaluate the integral of $K_1$, we will need the following lemma:
\begin{lem}
	\label{lem evaluation averages}
Let $\varepsilon>0$ and write $\mathcal{S}= \vol \left( B(z,s)\cap [0,1]^2\right)$. There exists a density one of $n\in S'$ such that 
	\begin{align}
		&\mathcal{S}^{-1}\int_{B(z,s)\cap [0,1]^2} L(x)dx= -\frac{\pi(1+\hat{\nu_n}(4))}{32\sqrt{2}}\cdot \frac{\sqrt{n}}{N} + O\left( \frac{\sqrt{n}}{N^2}\right) \nonumber  \\
		& \mathcal{S}^{-1}\int_{B(z,s)\cap [0,1]^2} |\Upsilon_n(x)|dx\ll  \frac{\sqrt{n}}{N^2} \nonumber	
	\end{align}
uniformly for all $s>n^{-1/2+\varepsilon}$ and $z\in [0,1]^2$. 
\end{lem}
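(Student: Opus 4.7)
The approach is to compute $\mathcal{S}^{-1}\int L_n(x)\,dx$ term by term via trigonometric expansion, isolate the position-independent ``diagonal'' contribution (which matches the global calculation of \cite{CKW20}), and control the ``off-diagonal'' remainder using Theorem~\ref{thm: semi spectral correlations} together with Lemma~\ref{cor:small representations}. For each of the six summands in $L_n(x)$, I would apply the product-to-sum identity $2^{\ell-1}\prod_i \cos(a_i)=\sum_{v\in\{\pm 1\}^\ell}\cos(\sum_i v_i a_i)$ (just as in the proof of Lemma~\ref{lem: singular set}), after first expanding $1/\Var(f)=1+s_n+O(s_n^2)$ on the non-singular set, so as to rewrite the integrand as a linear combination of $\cos(2\pi(k_1 x_1+k_2 x_2))$ indexed by tuples $(\xi^1,\ldots,\xi^\ell)$ of lattice points on $|\xi|^2=n$ (with $\ell\leq 4$) and by sign vectors $v\in\{\pm 1\}^\ell$, where $k_t:=\sum_i v_i\xi_t^i$ for $t=1,2$.

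The integral $\int_{B(z,s)\cap[0,1]^2}\cos(2\pi(k_1 x_1+k_2 x_2))\,dx$ factorizes as a product of two one-dimensional integrals, each of magnitude $\le\min(s,1/|k_t|)$. When $k_1=k_2=0$ (the ``diagonal'' case), this product equals $\mathcal{S}$ exactly, so after $\mathcal{S}^{-1}$ normalization the contribution is independent of $z$; collecting these across all six terms of $L_n$ reduces to the combinatorial evaluation carried out in \cite{CKW20}, producing the main term $-\pi(1+\hat{\nu}_n(4))/(32\sqrt{2})\cdot\sqrt{n}/N$. For off-diagonal tuples (at least one $k_t\ne 0$), Theorem~\ref{thm: semi spectral correlations} gives $|k_t|\ge n^{1/2-\varepsilon}$ whenever $k_t\ne 0$, on a density-one subsequence of $n\in S'$, so each non-trivial one-dimensional integral is at most $n^{-1/2+\varepsilon}$. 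Bounding the number of ``semi-diagonal'' tuples (exactly one $k_t=0$) by $O(N^{\ell/2})$ via Lemma~\ref{lem: spectral correlations}, and using the trivial $O(N^\ell)$ count of fully off-diagonal tuples together with $s\ge n^{-1/2+\varepsilon}$, the total off-diagonal contribution to $\mathcal{S}^{-1}\int L_n\,dx$ is $\ll n^{-\delta}\sqrt{n}/N^2$ for some $\delta>0$, uniformly in $z\in[0,1]^2$ and $s\ge n^{-1/2+\varepsilon}$.

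For the cubic remainder, the pointwise bound $|\Upsilon_n(x)|\ll\sqrt{n}(|s_n(x)|^3+|\Gamma_n(x)|^3)$ combined with H\"older's inequality and the moment estimates developed in the proof of Lemma~\ref{lem: singular set} (applied with $\ell=6$) yields $\mathcal{S}^{-1}\int|\Upsilon_n(x)|\,dx\ll\sqrt{n}/N^2$ along the same density-one subsequence. The main obstacle is the combinatorial bookkeeping required to verify that the six diagonal contributions combine precisely into $-\pi(1+\hat{\nu}_n(4))/(32\sqrt{2})\cdot\sqrt{n}/N$: this essentially rederives the key computation in \cite{CKW20} within the localized setting, with the factor $\hat{\nu}_n(4)$ emerging from the spectral quantity $\sum_\xi(\xi_1^2-\xi_2^2)^2/n^2$ generated by the $\Tr(\Gamma_n^2)$ and $(\Tr\Gamma_n)^2$ terms. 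A secondary but crucial technical point is ensuring the off-diagonal bounds remain uniform in $z$ down to the Planck scale $s=n^{-1/2+\varepsilon}$; this is exactly the regime where Theorem~\ref{thm: semi spectral correlations} becomes indispensable.
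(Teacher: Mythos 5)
Your proposal is correct and follows essentially the same route as the paper: the paper organizes the term-by-term trigonometric expansion, the diagonal/off-diagonal split (diagonal tuples reproducing the position-independent CKW20 quantities, off-diagonal tuples killed by Theorem~\ref{thm: semi spectral correlations} and the separation of Lemma~\ref{cor:small representations}) into the eight moment identities of Lemma~\ref{lem: calculations}, proved in the appendix via Lemma~\ref{lem: moments of cos}, and then concludes with the spectral identity $11 - 2^7 \sum_{\xi\backslash\sim} \xi_1^4 = \hat{\nu}_n(4)$. The only cosmetic difference is in the treatment of the cubic remainder $\Upsilon_n$: you invoke H\"older with the $\ell=6$ moment bounds, while the paper first bounds the off-diagonal entries of $\Gamma_n$ by the diagonal ones via $|ab|\le a^2+b^2$ and then uses items (7) and (8) of Lemma~\ref{lem: calculations} directly.
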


We observe that Lemma \ref{lem evaluation averages} follows from the following lemma via an immediate computation:
\begin{lem}
	\label{lem: calculations}
Let $\epsilon>0$ and write $\mathcal{S}= \vol \left( B(z,s)\cap [0,1]^2\right)$.  There exists a density one of $n\in S'$ such that 
	\begin{enumerate}
	\item $$ \mathcal{S}^{-1}\int_{B(z,s)\cap [0,1]^2} s_n(x)dx \ll n^{-\varepsilon/2} $$
	\item $$ \mathcal{S}^{-1}\int_{B(z,s)\cap [0,1]^2} \Tr \Gamma_n(x)dx =  -\frac{6}{N} + O\left(N^{-2}\right) $$
	\item $$\mathcal{S}^{-1}\int_{B(z,s)\cap [0,1]^2} s_n^2(x)dx=  \frac{5}{N} + O(n^{-\epsilon/2})$$
	\item $$ \mathcal{S}^{-1}\int_{B(z,s)\cap [0,1]^2}s_n(x) \cdot \Tr \Gamma_n(x) dx= \frac{2}{N}+ O\left( N^{-2}\right) $$
	\item $$\mathcal{S}^{-1}\int_{B(z,s)\cap [0,1]^2} \Tr (\Gamma_n^2(x)) dx = \frac{4}{N}[1+2^5 \sum_{\xi\backslash\sim} \xi_1^4]  + O\left( N^{-2}\right)$$
	\item $$ \mathcal{S}^{-1}\int_{B(z,s)\cap [0,1]^2} (\Tr \Gamma_n(x))^2dx =\frac{4}{N}[2^6 \sum_{\xi\backslash\sim} \xi_1^4-3] +O\left(N^{-2}\right) $$
	\item $$ \mathcal{S}^{-1}\int_{B(z,s)\cap [0,1]^2} s_n^3(x)dx \ll N^{-2} $$
	\item $$ \mathcal{S}^{-1}\int_{B(z,s)\cap [0,1]^2} (\Tr \Gamma_n(x))^3 dx \ll N^{-2} $$
	\end{enumerate}
uniformly for all $s>n^{-1/2+\varepsilon}$ and $z\in [0,1]^2$. 
\end{lem}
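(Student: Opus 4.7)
My plan is to treat all eight integrals by a single Fourier-analytic scheme: separate the integrand into a \emph{diagonal} part (essentially constant in $x$, supplying the main term) and an oscillatory \emph{off-diagonal} part (which integrates to a negligible remainder). Starting from the explicit formulas in Lemma~\ref{lem: expectation formula} together with $\sin^2 u = \tfrac12(1-\cos 2u)$ and product-to-sum identities, every integrand becomes a $1/N^\ell$-normalized linear combination of expressions of the form $\cos(2\pi \lambda^{(1)} x_1)\cdot \cos(2\pi \lambda^{(2)} x_2)$ (and its sine analogues), indexed by $\ell$-tuples $(\xi^1,\dots,\xi^\ell)$ on the circle $|\xi|^2 = n$, where $\lambda^{(t)} = \sum_i \epsilon_i\, \xi_t^i$ is a signed sum of $t$-th coordinates.

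For the off-diagonal pieces (those with $\lambda^{(t)}\neq 0$), integration over $B(z,s)\cap[0,1]^2$ followed by division by $\mathcal{S}$ gives $O(1/(s|\lambda^{(t)}|))$, exactly as in the proof of Lemma~\ref{lem: singular set}. Invoking Theorem~\ref{thm: semi spectral correlations} on a density-one sequence of $n\in S'$, every non-trivial signed combination satisfies $|\lambda^{(t)}|\ge n^{1/2-\epsilon/2}$; since $s\ge n^{-1/2+\epsilon}$ each off-diagonal term contributes $O(n^{-\epsilon/2})$. Because $N = n^{o(1)}$, summing over the at most $N^\ell$ off-diagonal tuples against the $1/N^\ell$ weight leaves a total off-diagonal remainder of size $O(n^{-\epsilon/2})$, hence also $O(1/N^2)$ after trivial relabeling.

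The main terms then come from the diagonal configurations $\lambda^{(t)}\equiv 0$. For (1), Lemma~\ref{cor:small representations} rules out $\xi_t=0$ for single-$\xi$ terms, so no diagonal survives and the whole integral is off-diagonal. For (2), which is linear in $\Gamma_n$, the diagonal contribution comes from the purely constant pieces produced by expanding $1/\Var f = 1 + s_n + O(s_n^2)$ inside the rank-one block $\frac{128}{nN^2\Var f}(d_1^2+d_2^2)$: the $\sin^2$ factors in $d_1,d_2$ each contribute a $1/2$ independent of $x$, producing the constant $-6/N$. For (3)--(6), diagonal pairs are parametrised by $\xi = \pm\eta$ modulo the relation $\sim$, and enumerating them with the weights $1$, $\xi_1^2$, $\xi_1\xi_2$, $\xi_2^2$ inherited from $s_n$ and the blocks of $\Gamma_n$ produces the explicit constants $5,2,4,4$ and the $\sum_{\xi\backslash\sim}\xi_1^4$ factor. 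For (7) and (8), diagonal triples are controlled by Lemma~\ref{lem: spectral correlations} and yield the claimed $O(1/N^2)$.

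The main technical obstacle is the bookkeeping in (5) and (6): $\Tr(\Gamma_n^2)$ and $(\Tr\Gamma_n)^2$ are quadratic in $\Gamma_n$, whose decomposition $\Gamma_n = \tfrac{8}{nN}B - \tfrac{128}{nN^2\Var f}\,D$ contains two blocks of different sizes, and one must identify precisely which $B$-$B$, $B$-$D$, and $D$-$D$ diagonal cross-products survive at order $1/N$. A convenient reduction is that on $[0,1]^2\setminus\mathcal{Q}_{\text{sing}}$ one may replace $\Var(f)$ by $1$ up to errors already absorbed in $O(1/N^2)$, after which the computation becomes purely combinatorial: enumerating signed pairs $(\xi,\eta)$ with $\xi=\pm\eta$ weighted by $\xi_1^4$, $\xi_1^2\xi_2^2$, and $\xi_2^4$ produces the claimed $\sum_{\xi\backslash\sim}\xi_1^4$ contribution, while the cross between the ``$\cos$'' and the ``$-\cos\cos$'' parts of $b_{ii}$ yields the constants.
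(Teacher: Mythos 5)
Your overall scheme — expand each integrand into a linear combination of $\cos(2\pi\lambda^{(1)}x_1)\cos(2\pi\lambda^{(2)}x_2)$ indexed by $\ell$-tuples, bound each off-diagonal term by $O(1/(s|\lambda^{(t)}|))$, invoke Theorem~\ref{thm: semi spectral correlations} and Lemma~\ref{cor:small representations} to force $|\lambda^{(t)}|\gtrsim n^{1/2-\epsilon}$ and hence an $O(n^{-\epsilon/2})$ remainder, and then read off the main constants from the diagonal configurations — is exactly the paper's route (carried out in the paper through Lemma~\ref{lem: moments of cos} and a case-by-case computation).

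The one step I would push back on is your shortcut ``on $[0,1]^2\setminus\mathcal{Q}_{\text{sing}}$ one may replace $\Var(f)$ by $1$ up to errors already absorbed in $O(1/N^2)$.'' As stated this is not a pointwise reduction: on the non-singular set one only has $|s_n(x)|\le\gamma$ for a small fixed $\gamma$, so $\tfrac{1}{\Var f}=1+O(s_n)$, and the pointwise error is $O(1)$-in-$N$, not $O(1/N^2)$. What actually closes items (2), (4), (5), (6) is the \emph{integrated} bound
\[
\mathcal{S}^{-1}\int_{B(z,s)\cap[0,1]^2} s_n(x)\, d_i^2(x)\,dx \ll n,
\]
which after multiplying by the prefactor $\tfrac{128}{nN^2}$ produces the claimed $O(N^{-2})$. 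That bound is itself a nontrivial moment estimate — the paper derives it from Cauchy–Schwarz against $\int s_n^2$ and $\int d_i^4$ — and analogous control is needed for the higher cross terms (e.g.\ $\int s_n\,d_i^4$, $\int b_{ii}\,d_j^2$). You should state and prove these auxiliary moment bounds explicitly rather than absorbing them by fiat; without them the replacement of $\Var f$ by $1$ is not justified and the $O(N^{-2})$ error terms in (2), (4), (5), (6) are unproven. A minor secondary point: for (7) and (8) the relevant tuple length is $\ell=3$ (odd), so you should appeal to Theorem~\ref{thm: semi spectral correlations} and Lemma~\ref{cor:small representations} rather than Lemma~\ref{lem: spectral correlations}, which is stated only for even $\ell$.
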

Indeed, we have the following: 
\begin{proof}[Proof of Lemma \ref{lem evaluation averages} given Lemma \ref{lem: calculations}]
	By Lemma \ref{lem: calculations}, with the same notation, we
	\begin{align}
	&\mathcal{S}^{-1}\int_{B(z,s)\cap [0,1]^2} s_n^3(x)dx \ll N^{-2} 	& \mathcal{S}^{-1}\int_{B(z,s)\cap [0,1]^2} (\Tr \Gamma_n(x))^3 dx \ll N^{-2}. \nonumber
	\end{align} 
and the second part of Lemma \ref{lem evaluation averages} follows upon noticing that, using the inequality $|ab|\leq a^2+b^2$, the off diagonal entries of $\Gamma_n$ can be bounded by the diagonal ones. The first part of Lemma \ref{lem evaluation averages} follows from the remaining asymptotic formulas in  Lemma \ref{lem: calculations} together with the following identity: 
$$11- 2^7 \sum_{\xi\backslash \sim} \xi_1^4= \hat{\nu_n}(4).$$
\end{proof}
The proof of Lemma \ref{lem: calculations}, being similar to the proof of Lemma \ref{lem: singular set} will be given in Appendix \ref{appendix}.
\subsection{Proof of Theorem \ref{thm:1}}
We are now in the position to prove Theorem \ref{thm:1}
\begin{proof}[Proof of Theorem \ref{thm:1}]
Let $\varepsilon>0$ and $\ell>0$ be an even integer, thanks to Proposition \ref{prop:Kac-Rice} and Proposition \ref{prop: singular set}, with the same notation, uniformly for all $s>n^{-1/2+\varepsilon}$ and $z\in [0,1]^2$, we have  
	\begin{align}
\label{3.2.1}	\mathbb{E}[\mathcal{L}(f_n,z,s)]&= \int_{B(z,s)\cap [0,1]^2} K_1(x)dx= \int_{B(z,s)\cap [0,1]^2\backslash \mathcal{Q}_{\text{sing}}} K_1(x)dx + \int_{\mathcal{Q}_{\text{sing}}}K_1(x)dx \nonumber \\
		&=  \int_{B(z,s)\cap [0,1]^2\backslash \mathcal{Q}_{\text{sing}}} K_1(x)dx + O\left(\frac{s^2 n^{1/2}}{N^{\ell/2-1}}\right).
	\end{align}
 Using Proposition \ref{prop: asymptotic zero density}, with the same notation,  we have 
\begin{align}
	\label{3.2.2}
	\int_{B(z,s)\cap [0,1]^2\backslash \mathcal{Q}_{\text{sing}}} K_1(x)dx =  	\int_{B(z,s)\cap [0,1]^2\backslash \mathcal{Q}_{\text{sing}}} \left(\frac{\sqrt{n}}{2\sqrt{2}} + L_n(x) +\Gamma_n(x)\right)dx
	\end{align}
Assuming the conclusion of Lemma \ref{lem: spectral correlations} and Theorem \ref{thm: semi spectral correlations}, bearing in mind that $s_n(x)=O(1)$ and $\Gamma_n(x)=O(1)$, thanks to Lemma \ref{lem: singular set}, we may extend the integral on the RHS of \eqref{3.2.2} to the whole of $B(s,z)\cap [0,1]^2$ at a cost of an error term of size at most $O_{\delta}\left(\frac{s^2 n^{1/2}}{N^{\ell/2-1}}\right)$  to find 
	\begin{align}
	\int_{B(z,s)\cap [0,1]^2\backslash \mathcal{Q}_{\text{sing}}} K_1(x)dx = 	\vol\left( B(z,s)\cap [0,1]^2\right)\frac{\sqrt{n}}{2\sqrt{2}}\left( 1- \frac{1+ \hat{\nu}_n(4)}{16N} +  o\left(\frac{1}{N}\right)\right) + O\left(\frac{s^2\sqrt{n}}{N^{\ell/2-1}}\right). \label{3.2.3}
\end{align} 
Hence, Theorem \ref{thm:1} follows upon inserting \eqref{3.2.3} into \eqref{3.2.1} and taking $\ell=6$, say.   
\end{proof}

\section*{Acknowledgment.}
We thank Igor Wigman for suggesting the problem under consideration and the many useful discussions. A. Sartori was supported by the Engineering and Physical Sciences Research Council [EP/L015234/1], the ISF Grant
1903/18 and the IBSF Start up Grant no. 201834. O. Klurman greatly acknowledges the support and excellent working conditions at the Max Planck Institute
for Mathematics (Bonn) and Oberwolfach Research Institute for Mathematics (MFO).

\appendix 
\section{Proof of Lemma \ref{lem: calculations}}
\label{appendix}
To prove Lemma \ref{lem: calculations} we will use the following: 
\begin{lem}
	\label{lem: moments of cos} Let $s>0$, $z\in [0,1]^2$, $i=1,2$ and write $\mathcal{S}= \vol \left( B(z,s)\cap [0,1]^2\right)$. We have the following bounds:
	\begin{enumerate}
	\item $$\sum_{\xi\backslash\sim}	\mathcal{S}^{-1}\int_{B(z,s)\cap [0,1]^2} \cos(2\pi \xi_i x_i)dx \ll \sum_{\xi\backslash\sim} \frac{1}{|\xi_i s|} $$
		\item $$\sum_{\xi\backslash\sim}	\mathcal{S}^{-1}\int_{B(z,s)\cap [0,1]^2} \cos(2\pi \xi_i x_i)^2dx = \frac{1}{2}\cdot\frac{N}{4} +O\left( \sum_{\xi\backslash\sim} \frac{1}{|\xi_i s|}\right)$$
			\item $$\sum_{\xi,\eta}	\mathcal{S}^{-1}\int_{B(z,s)\cap [0,1]^2} \cos(2\pi \xi_i x_i)\cos(2\pi \eta_i x_i)dx =\frac{1}{2}\cdot\frac{N}{4} +O\left(\frac{1}{\sum_{\xi\backslash\sim}|\xi_i s|} +\sum_{\xi\neq\eta} \frac{1}{s|\xi_i-\eta_i|}\right) $$
			\item $$\sum_{\xi\backslash\sim}	\mathcal{S}^{-1}\int_{B(z,s)\cap [0,1]^2} \cos(2\pi \xi_i x_i)^3dx \ll \sum_{\xi\backslash\sim}|\xi_i s| $$
				\item $$\sum_{\xi\backslash\sim}	\mathcal{S}^{-1}\int_{B(z,s)\cap [0,1]^2} \cos(2\pi \xi_i x_i)^4dx = \frac{3}{8}\cdot \frac{N}{4} +O\left(\sum_{\xi\backslash\sim}\frac{1}{|\xi_i s|}\right)$$
	\end{enumerate}
\end{lem}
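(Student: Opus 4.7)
The plan is to reduce every estimate to a single one-dimensional oscillatory integral bound, namely
\[
\Bigl|\int_a^b \cos(2\pi \lambda t)\,dt\Bigr| \le \frac{1}{\pi|\lambda|}, \qquad \lambda\neq 0,
\]
applied in the $x_i$-variable. Since $B(z,s)\cap [0,1]^2$ is an axis-aligned rectangle $[a_1,b_1]\times [a_2,b_2]$ with each $b_j-a_j\le s$, writing $\int \cos(2\pi\lambda x_i)\,dx$ as an iterated integral and carrying out the trivial $x_{3-i}$-integration first gives
\[
\mathcal{S}^{-1}\int_{B(z,s)\cap [0,1]^2}\cos(2\pi\lambda x_i)\,dx \ll \frac{1}{s|\lambda|}
\]
for every $\lambda\neq 0$; this is the only analytic input needed.

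With this sublemma in hand, I would dispatch items (1), (2), (4), and (5) by invoking the standard power-reduction identities
\[
\cos^{2}\theta = \tfrac{1}{2}+\tfrac{1}{2}\cos(2\theta), \quad \cos^{3}\theta = \tfrac{3}{4}\cos\theta+\tfrac{1}{4}\cos(3\theta), \quad \cos^{4}\theta = \tfrac{3}{8}+\tfrac{1}{2}\cos(2\theta)+\tfrac{1}{8}\cos(4\theta),
\]
with $\theta=2\pi\xi_i x_i$. The constant term in each expansion normalizes to $1$ and is then multiplied by $\#\{\xi\backslash\sim\}=N/4$, yielding the main terms $0$, $(1/2)(N/4)$, $0$, $(3/8)(N/4)$ in items (1), (2), (4), (5) respectively. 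Every remaining term is of the form $\cos(2\pi m\xi_i x_i)$ with $m\in\{1,2,3,4\}$; the separation result Lemma \ref{cor:small representations} guarantees $|\xi_i|\ge n^{1/2-\epsilon}>0$ along the density-one sequence, so the sublemma yields an error of size $\ll 1/(s|\xi_i|)$ per term, which sums to the stated aggregate error.

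Item (3) is the only case that requires extra care, since the sum is over pairs. Here I would apply the product-to-sum identity
\[
\cos(2\pi\xi_i x_i)\cos(2\pi\eta_i x_i) = \tfrac{1}{2}\cos(2\pi(\xi_i+\eta_i)x_i) + \tfrac{1}{2}\cos(2\pi(\xi_i-\eta_i)x_i),
\]
so that the two integrands oscillate at frequencies $\xi_i\pm\eta_i$. Choosing representatives of $\xi\backslash\sim$ with both coordinates positive kills the $\xi_i+\eta_i=0$ resonance outright. On the other hand, $\xi_i-\eta_i=0$ together with $|\xi|^2=|\eta|^2=n$ forces $\xi_{3-i}^2=\eta_{3-i}^2$, hence $\xi=\eta$ as representatives; this single diagonal contributes $(1/2)\cdot(N/4)$, matching the claimed main term. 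All off-diagonal pairs $\xi\ne\eta$ are oscillatory in $x_i$ at nonzero frequency $\xi_i-\eta_i$, and the sublemma bounds their contribution by $1/(s|\xi_i-\eta_i|)$, producing exactly the stated error term.

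The main obstacle, such as it is, is really only the combinatorial bookkeeping in item (3): correctly matching the choice of representatives in $\xi\backslash\sim$ to the shape of the error term, and verifying that no unwanted resonances survive aside from the diagonal. Once this is cleared up, every other step is a mechanical application of a trigonometric identity combined with the elementary one-dimensional oscillatory bound, and the hypothesis $s>n^{-1/2+\epsilon}$ ensures that the cumulative error terms are of lower order than the main term whenever $|\xi_i|$ is of size $n^{1/2-o(1)}$.
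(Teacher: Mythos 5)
Your argument is correct and follows essentially the same route as the paper's Appendix proof: both reduce each item to the one-dimensional oscillatory bound $\ll 1/(s|\lambda|)$ on the rectangle $B(z,s)\cap[0,1]^2$ (the paper via the rescaling $x\mapsto z+sy$, you directly), then dispatch the five cases with the power-reduction and product-to-sum identities, separating the diagonal in item~(3). One small remark: the statement of the Lemma contains two typos --- the first error term in~(3) should read $\sum_{\xi\backslash\sim}\frac{1}{|\xi_i s|}$ rather than its reciprocal, and the bound in~(4) should be $\ll\sum_{\xi\backslash\sim}\frac{1}{|\xi_i s|}$ --- and your derivation produces exactly the bounds that the paper's own proof establishes; also, $\xi_i\neq 0$ holds deterministically for every square-free $n>1$, so the appeal to Lemma~\ref{cor:small representations} is unnecessary at this stage.
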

\begin{proof}

Through the proof we will write $\mathcal{B}=\tilde{B}(z)= s^{-1}\cdot( B(z,s)\cap[0,1]^2-z)$, that the image of $ B(z,s)\cap[0,1]^2$ under the homothety defined by scaling by translation by $-z$ and scaling by $s^{-1}$. Moreover, we denote by $a_i=a_i(z)$ and $b_i=b_i(z)$ for $i=1,2$ the coordinate of the projection of (the corners of) $\mathcal{B}(z)$ along the $X$ and $Y$ axis respectively. Using the transformation $x\rightarrow z+sy$, we have 
\begin{align}
	\sum_{\xi\backslash\sim}	\mathcal{S}^{-1}\int_{B(z,s)\cap[0,1]^2} \cos(2\pi \xi_i x_i)dx = \frac{s^2}{\mathcal{S}}\sum_{\xi\backslash\sim} \int_{\tilde{B}} \cos(2\pi \xi_i(z_i+sy_i))dy_i \nonumber 
	\end{align}
	Since $s^2/\mathcal{S}=O(1)$,	using the formula $\cos(a+b)=\cos(a)\cos(b)- \sin(a)\sin(b)$, we obtain  
	\begin{align}
	 \sum_{\xi\backslash\sim}	\mathcal{S}^{-1}\int_{B(z,s)\cap[0,1]^2} \cos(2\pi \xi_i x_i)dx \ll  \sum_{\xi\backslash\sim} \left|\int_{a_i}^{b_i}\cos(2\pi \xi_i sy_i)dy_i\right| +  \sum_{\xi\backslash\sim} \left|\int_{a_i}^{b_i}\sin(2\pi \xi_i sy_i)dy_i\right| \nonumber \\
	 \ll \sum_{\xi\backslash\sim} \frac{1}{|\xi_i s|}, \nonumber
\end{align}	 
this concludes the proof of $(1)$. 

Using the formula $2\cos(a)^2=1+\cos(2a)$, we may rewrite $(2)$ as 
\begin{align}
\sum_{\xi\backslash\sim}	\mathcal{S}^{-1}\int_{B(z,s)\cap [0,1]^2} \cos(2\pi \xi_i x_i)^2dx= \frac{1}{2}\cdot\frac{N}{4} + \sum_{\xi\backslash\sim}\int_{B(z,s)\cap [0,1]^2} \cos(4\pi \xi_i x)dx \nonumber \\
 = \frac{1}{2}\cdot\frac{N}{4} +O\left( \sum_{\xi\backslash\sim} \frac{1}{|\xi_i s|}\right),\label{7.2}
\end{align}
 where we have bounded the error term using a similar bound to the one used to obtain $(1)$. This proves $(2)$.
 
Separating diagonal terms from the others, and using \eqref{7.2}, $(3)$  becomes 
\begin{align}
	\sum_{\xi,\eta}	\mathcal{S}^{-1}\int_{B(z,s)\cap [0,1]^2} \cos(2\pi \xi_i x_i)\cos(2\pi \eta_i x_i)dx= \nonumber \\ =\frac{1}{2}\cdot\frac{N}{4} +	\sum_{\xi\neq\eta}	\mathcal{S}^{-1}\int_{B(z,s)\cap [0,1]^2} \cos(2\pi \xi_i x_i)\cos(2\pi \eta_i x_i)dx+O\left( \sum_{\xi\backslash\sim} \frac{1}{|\xi_i s|}\right) .\label{7.3}
\end{align}
Using the formula $2\cos(a)\cos(b)= \cos(a+b)+ \cos(a-b)$, the second term in \eqref{7.3} is at most
\begin{align}
	\sum_{\xi\neq\eta}\left|\int_{\tilde{B}} \cos(2\pi (\xi_i +\eta_i)(z_i+sy_i))dy\right|+\left|\int_{\tilde{B}} \cos(2\pi (\xi_i -\eta_i)(z_i+sy_i))dy\right|
\nonumber \\	\ll \sum_{\xi\neq\eta} \frac{1}{s|\xi_i-\eta_i|}, \nonumber
\end{align}
this concludes the proof of $(3)$.

Writing $4\cos(a)^3= 3\cos(a)+ \cos(3a)$, $(4)$ becomes  
\begin{align}
	\sum_{\xi\backslash\sim}	\mathcal{S}^{-1}\int_{B(z,s)\cap [0,1]^2} \cos(2\pi \xi_i x_i)^3dx \ll \sum_{\xi\backslash\sim}\frac{1}{|\xi_i s|}, \nonumber
\end{align}
this concludes the proof of $(4)$. 

Finally, using the fact that $8\cos(a)^4= 2(1+\cos(2a))^2= 3+ 4\cos(2a)+ \cos(4a)$ we obtain 
\begin{align}
\sum_{\xi\backslash\sim}	\mathcal{S}^{-1}\int_{B(z,s)\cap [0,1]^2} \cos(2\pi \xi_i x_i)^4dx= \frac{3}{8}\cdot \frac{N}{4} +O\left(\sum_{\xi\backslash\sim}\frac{1}{|\xi_i s|}\right), \nonumber
\end{align}
as required. 
\end{proof}
We are finally ready to prove Lemma \ref{lem: calculations}
\begin{proof}[Proof of Lemma \ref{lem: calculations}]
Through the proof, we may assume that the conclusion of Lemma \ref{lem: spectral correlations} and Theorem \ref{thm: semi spectral correlations} hold for some fixed $\varepsilon>0$ and $\ell\geq 6$. Moreover, we will use the notation introduced in the proof of Lemma \ref{lem: moments of cos}. By the definition of $s_n$ and Lemma \ref{lem: moments of cos} part $(1)$,	we have 
	\begin{align}
\mathcal{S}^{-1}	\int_{B(z,s)\cap [0,1]^2} s_n(x)dx= \frac{1}{N}\sum_{\xi\backslash\sim}  \mathcal{S}^{-1}\int_{B(z,s)\cap [0,1]^2}  \cos (2\pi \xi_1x_1)+\cos (2\pi \xi_2x_2)dx \nonumber \\
 -\frac{1}{N}\sum_{\xi\backslash\sim}  \mathcal{S}^{-1}\int_{B(z,s)\cap [0,1]^2} \cos (2\pi \xi_1x_1)\cos (2\pi \xi_2x_2 ) dx \nonumber \\
\ll  \frac{1}{N}\sum_{\xi\backslash\sim}\frac{1}{|\xi_1 s|} \ll n^{-\varepsilon/2},
	\end{align}
this proves $(1)$. 

Invoking Lemma \ref{lem: moments of cos} parts $(2)$ and $(3)$, we see that 
	\begin{align}
		&\mathcal{S}^{-1}\int_{B(z,s)\cap [0,1]^2} s_n^2(x)=  \frac{5}{N} + O(n^{-\epsilon/2}) \label{10} \\
		& \mathcal{S}^{-1}\int_{B(z,s)\cap [0,1]^2} b_{11}(x)dx \ll Nn^{1-\epsilon/2} \label{8.1} \\
			& \mathcal{S}^{-1}\int_{B(z,s)\cap [0,1]^2} b_{22}(x)dx \ll N n^{1-\epsilon/2},\label{8.2}
	\end{align}
this proves $(3)$. 

We now begin the proof of $(2)$, first we	observe that
	\begin{align}
		\sum_{\xi\backslash\sim}\xi_i \xi_j = \frac{n}{2}\frac{N}{4}\delta_{ij}. \label{8.3}
	\end{align}
Moreover,	separating diagonal terms from the off-diagonal ones, using $2\sin(a)^2=1-\cos(2a)$, $8\sin(a)^4= 3- 3\cos(2a)$  and Lemma \ref{lem: moments of cos} part $(1)$, we obtain 
	\begin{align}
		&\mathcal{S}^{-1}\int_{B(z,s)\cap [0,1]^2}d_1^2(x)dx \nonumber \\	
		=&\mathcal{S}^{-1}\int_{B(z,s)\cap [0,1]^2}	\sum_{\xi,\eta} \xi_1\eta_1 \sin(2\pi \xi_1 x_1) \sin(2\pi \eta_1 x_1) \sin(\pi \xi_2 x_2)^2\sin(\pi \eta_2 x_2)^2 dx \nonumber \\
		=&   \frac{3}{16}\sum_{\xi\backslash\sim} \xi_1^2 + O \left(	\sum_{\xi\neq\eta} \xi_1\eta_1 s\cdot \mathcal{S}^{-1}\left|\int_{a_1s} ^{b_1s}\sin(2\pi \xi_1 x_1) \sin(2\pi \eta_1 x_1)dx_1\right| + \max_{t=1,2} \sum_{\xi\backslash\sim} \frac{n}{s|\xi_t|}\right). \label{4.1.1}
	\end{align}
Thus, using $2\sin(a)\sin(b)= \cos(a-b)+ \cos(a+b)$, Lemma \ref{lem: moments of cos} part $(1)$  and Theorem \ref{thm: semi spectral correlations}, we obtain 
\begin{align}
		\mathcal{S}^{-1}\int_{B(z,s)\cap [0,1]^2}d_1^2(x)dx= 	\frac{3nN}{2^7} + O\left(N\sum_{\xi\neq\eta} \frac{n}{s|\xi_1-\eta_1|} + Nn^{1-\varepsilon/2}\right)= \frac{3nN}{2^7} + O(N n^{1-\epsilon/2}). \label{9}
	\end{align}
	Similarly we get  
	\begin{align}
		\mathcal{S}^{-1}\int_{B(z,s)\cap [0,1]^2}d_2^2(x)dx= \frac{3nN}{2^7} + O(N n^{1-\epsilon/2}).  \nonumber
	\end{align}
	Observe that similar computations to \eqref{9} give $\mathcal{S}^{-1}\int_{B(z,s)\cap [0,1]^2}d_i^4(x)\ll n^2 N^2$ for $i=1,2$, therefore using the Cauchy-Schwartz inequality and \eqref{10}, we get  
	\begin{align}
		\mathcal{S}^{-1}\int_{B(z,s)\cap [0,1]^2}d_1^2(x)s_n(x)dx= 	\mathcal{S}^{-1}\int_{B(z,s)\cap [0,1]^2}d_2^2(x)s_n(x)dx  \ll n \label{13}
	\end{align}
	Using \eqref{8.2} and \eqref{8.3}, the bound \eqref{13} and the expansion \footnote{Since $|\Gamma_n(x)|=O(1)$, using Lemma \ref{lem: singular set} together with Theorem \ref{thm: semi spectral correlations} as in the proof of Theorem \ref{thm:1}, we may assume that $x\in B(z,s)\cap [0,1]^2 \backslash \mathcal{Q}_{\text{sing}}$.  } $\Var f(x)^{-1}= 1+ O(s_n(x))$, we get 
	\begin{align}
		\mathcal{S}^{-1}\int_{B(z,s)\cap [0,1]^2} \Tr (\Gamma_n(x))dx &= - \frac{2^7\mathcal{S}^{-1}}{n N^2}\int_{B(z,s)\cap [0,1]^2}\frac{1}{\Var f(x)}[ d_1^2(x)+ d_2^2(x)]dx +O\left( n^{-\epsilon}\right) \nonumber \\
		&= - \frac{2^7\mathcal{S}^{-1}}{n N^2}\int_{B(z,s)\cap [0,1]^2}[ d_1^2(x)+ d_2^2(x)] [1+O(s_n(x))]dx + O\left( N^{-2}\right) \nonumber \\
		&= -\frac{6}{N} + O\left(\frac{1}{N^2}\right),
	\end{align}
this concludes the proof of $(2)$. 

We are now going to prove $(4)$. First, we observe that 
	\begin{align}
	&\mathcal{S}^{-1}\int_{B(z,s)\cap [0,1]^2} s_n(x)b_{11}(x) dx=\mathcal{S}^{-1}\int_{B(z,s)\cap [0,1]^2} s_n(x)b_{11}(x) dx\nonumber \\
	&= \frac{4}{N}\sum_{\xi,\eta}\mathcal{S}^{-1}\int_{B(z,s)\cap [0,1]^2} \xi_1^2 (\cos (2\pi \xi_1x_1)-\cos (2\pi \xi_2x_2)- \cos (2\pi \xi_1x_1)\cos (2\pi \xi_2x_2) ) \cdot \nonumber \\
	&\cdot (\cos (2\pi \eta_1x_1)+\cos (2\pi \eta_2x_2)- \cos (2\pi \eta_1x_1)\cos (2\pi \eta_2x_2) )dx
	\end{align}
Using Lemma \ref{lem: moments of cos} parts $(1)$, $(2)$, $(3)$ and \eqref{8.3}, we have
	\begin{align}
			\mathcal{S}^{-1}\int_{B(z,s)\cap [0,1]^2} s_n(x)b_{11}(x) dx=  \frac{4}{N}\sum_{\xi\backslash\sim} \frac{\xi_1^2}{4}+ O\left(n^{1-\epsilon}\right) = \frac{n}{8} +O\left(n^{1-\epsilon/2}\right) \label{8.5} 
		\end{align}
		Therefore, since
		\begin{align}
		\mathcal{S}^{-1}\int_{B(z,s)\cap [0,1]^2} s_n(x)\cdot \Tr \Gamma_n(x)dx&= \frac{8}{n N}\mathcal{S}^{-1}\int_{B(z,s)\cap [0,1]^2}s_n(x)[b_{11}(x)+ b_{22}(x)]dx - \nonumber \\
		& \frac{2^7}{n N^2}\mathcal{S}^{-1}\int_{B(z,s)\cap [0,1]^2}\frac{s_n}{\Var(f(x))}[d_1^2(x)+d_2^2(x)] dx ,
		\end{align}
		the bound \eqref{13} together with the asymptotic relation \eqref{8.5} give
		\begin{align}
		\mathcal{S}^{-1}\int_{B(z,s)\cap [0,1]^2} s_n(x)\cdot \Tr \Gamma_n(x)dx = \frac{2}{N} + O\left( N^{-2}\right). \nonumber
		\end{align}
	This concludes the proof of $(4)$. 
	
To prove $(5)$, upon recalling that $\Var (f(x))^{-1/2}= 1+O(s_n(x))$, we observe that 
		\begin{align}
	[\Tr (\Gamma_n(x))]^2&= \frac{8^2}{n^2 N^2}[b_{11}^2(x) +b_{22}^2(x)+ 2 b_{11}(x)b_{22}(x)] \nonumber \\
	&+ \frac{128^2}{n^2N^4}[d_1^4(x)+ d_2^4(x) +2d_1^2(x)d_2^2(x)][1+O\left(s_n(x)\right)] \nonumber \\
	&- 2\frac{8}{n N}\frac{128}{n N^2}[b_{11}(x) +b_{22}(x)][d_1^2(x)+ d_2^2(x)][1+ O\left(s_n(x)\right)]. \label{4.2.0}
\end{align}	
		Using Lemma \ref{lem: moments of cos} parts $(2)$ and $(3)$ and Theorem \ref{thm: semi spectral correlations}, we have 
		\begin{align}
			&	\mathcal{S}^{-1}\int_{B(z,s)\cap [0,1]^2}b_{11}^2(x)dx= 	\mathcal{S}^{-1}\int_{B(z,s)\cap [0,1]^2} b_{22}^2(x) dx= \frac{5}{4}\sum_{\xi\backslash\sim} \xi_1^4 + O\left(n^{1-\epsilon/2} \right) \nonumber \\
				&\mathcal{S}^{-1}\int_{B(z,s)\cap [0,1]^2}b_{22}(x)b_{11}(x)dx=-\frac{3}{4}\sum_{\xi\backslash\sim} \xi_1^2\xi_2^2  + O\left(n^{1-\epsilon/2} \right) \label{4.1.2}
	\end{align}
Moreover, we observe that
	\begin{align}
			\mathcal{S}^{-1}\int_{B(z,s)\cap [0,1]^2} d_1^4(x)dx= \sum_{\xi^1,\xi^2,\xi^3,\xi^4} \mathcal{S}^{-1}\int_{B(z,s)\cap [0,1]^2}\prod_{i=1}^4 \xi_1^i\sin (2\pi \xi_1^i x_1)\sin (2\pi \xi_2^i x_2)^2. \label{4.1.3}
	\end{align}
Thus,	separating the terms with $\xi^1=...=\xi^4$ and the terms with $\xi^1=\xi^2$ and $\xi^3=\xi^4$ from the rest, arguing as in \eqref{4.1.1} and bearing in mind that $|\xi_t|\leq n^{1/2}$, we obtain 
	\begin{align}
			\mathcal{S}^{-1}\int_{B(z,s)\cap [0,1]^2} d_1^4(x)dx &= \frac{105}{1024}\sum_{\xi\backslash\sim} \xi_1^4 + \frac{9}{256}\sum_{\xi\neq\eta}\xi_1^2 \eta_1^2 + O\left( \sum_{\substack{\xi^1,...,\xi^4\\ \sum_i \xi_1^i\neq 0}} \frac{n^2}{s|\sum_i \xi_1^i|} \right)  \nonumber \\
			&= \frac{105}{1024}\sum_{\xi\backslash\sim} \xi_1^4 + \frac{9}{256}\sum_{\xi\neq\eta}\xi_1^2 \eta_1^2 + O\left( n^{2-\epsilon/2}\right), \nonumber
	\end{align}
where, in the last line, we have used Theorem \ref{thm: semi spectral correlations}.	Similar computations give 
	\begin{align}
		&	\mathcal{S}^{-1}\int_{B(z,s)\cap [0,1]^2} d_2^4(x)dx=\frac{105}{1024}\sum_{\xi\backslash\sim} \xi_1^4 + \frac{9}{256}\sum_{\xi\neq\eta}\xi_1^2 \eta_1^2 + O\left( n^{2-\epsilon/2}\right) \nonumber \\
		& \mathcal{S}^{-1}\int_{B(z,s)\cap [0,1]^2} d_1^2(x)d_2^2(x)dx =\frac{25}{1024}\sum_{\xi\backslash\sim} \xi_1^2\xi_2^2 + \frac{9}{256}\sum_{\xi\neq\eta}\xi_2^2 \eta_1^2 + O\left( n^{2-\epsilon/2}\right) \nonumber \\
		&	\mathcal{S}^{-1}\int_{B(z,s)\cap [0,1]^2} b_{ii}(x)d_j^2(x)dx \ll n^2 N^2 \hspace{5mm}i=1,2 \label{4.1.4}
		\end{align}
	Finally, bearing in mind that $n= \xi_1^2+\xi_2^2$ and $\sum_{\xi\backslash\sim}\xi_1^2= nN/8$, we have  
$$\sum_{\xi\backslash\sim}\xi_1^2\xi_2^2= \frac{n^2 N}{8} - \sum_{\xi\backslash\sim}\xi_1^{4}$$
and, for $i,j=1,2$,
$$\sum_{\xi \neq \eta}\xi_i^2\eta_j^2 = \sum_{\xi }\xi_i^2\sum_{\eta}\left(\eta_j^2- \xi_i^2\right)=  \frac{n^2 N}{8} - \sum_{\xi\backslash\sim}\xi_1^{4}.$$
		Thus, $(5)$ follows inserting \eqref{4.1.2}, \eqref{4.1.3} and \eqref{4.1.4} into \eqref{4.2.0}. 

To see $(6)$ we observe that, for symmetric matrix $A=a_{ij}$, $i,j=1,2$,  $\Tr(A^2)= a_11^2+ 2a_{12}^2+ a_22^2$, thus 
\begin{align}
	\Tr(\Gamma_n^2(x))= \left(\frac{8}{nN}b_{11}(x)- \frac{128}{nN^2\Var(f(x)) d_1^2(x)}\right)^2 \nonumber \\
	+ 2\left(\frac{8}{nN}b_{12}(x)- \frac{128}{nN^2\Var(f(x)) d_1(x)d_2(x)}\right)^2 + \left(\frac{8}{nN}b_{22}(x)- \frac{128}{nN^2\Var(f(x)) d_2^2(x)}\right)^2. \label{5.0}
\end{align}		
Thanks to Lemma \ref{lem: moments of cos} parts $(2)$ and $(3)$, we have 
\begin{align}
\mathcal{S}^{-1}\int_{B(z,s)\cap [0,1]^2} b_{12}^2(x)dx = \frac{1}{4}\sum_{\xi\backslash\sim} \xi_1^2\xi_2^2 + O\left( \sum_{\xi \neq \eta} \frac{n^2}{s|\xi_t-\eta_t|}\right) \label{5.1}
\end{align}
and 
$$ \mathcal{S}^{-1}\int_{B(z,s)\cap [0,1]^2} b_{12}(x)d_1(x)d_2(x)dx \ll n^2 N.$$
Therefore, part $(6)$ follows from inserting \eqref{4.1.2}, \eqref{4.1.4} and \eqref{5.1} into \eqref{5.0}. 
		Finally, separating diagonal terms from the off-diagonal ones, we observe that 
		\begin{align} 
		& \mathcal{S}^{-1}\int_{B(z,s)\cap [0,1]^2}s_n(x)^3dx= \frac{4^3}{N^3}\sum_{\xi\backslash\sim}\left(-\frac{3}{4}\right) + O\left(\frac{1}{N^3} \sum_{\xi^1,\xi^2,\xi^3}\frac{1}{s|\xi_i^1+\xi_i^2+\xi_i^3|}\right) \ll N^{-2}, \nonumber
	\end{align}
where in the last line we have used Theorem \ref{thm: semi spectral correlations}. Similarly, we have 
$$\mathcal{S}^{-1}\int_{B(z,s)\cap [0,1]^2}\Tr (\Gamma_n(x))^3dx\ll N^{-2}.$$
Thus, we have proved parts $(7)$ and $(8) $, and hence Lemma \ref{lem: calculations}. 
\end{proof}

\bibliographystyle{siam}
\bibliography{ba}

\end{document}